\definecolor{darkblue}{rgb}{0,0,0.6}
\newcounter{commentcounter}
\title[The FJC for hyperbolic and CAT(0)-gropus revisited]{The Farrell--Jones conjecture for hyperbolic and CAT(0)-groups revisited}
\author{Daniel Kasprowski and Henrik R\"uping}
\address{Max-Planck-Institut f\"ur Mathematik, Vivatsgasse 7, 53111 Bonn, Germany}
\email{kasprowski@mpim-bonn.mpg.de}
\newcommand{\IR}{\mathbb{R}}
\newcommand{\bbR}{\mathbb{R}}
\newcommand{\bbH}{\mathbb{H}}
\newcommand{\bbZ}{\mathbb{Z}}
\newcommand{\bbN}{\mathbb{N}}
\newcommand{\ignore}[1]{}
\newcommand{\FJCw}{{FJCw}}
\DeclareMathOperator{\id}{id}
\DeclareMathOperator{\Map}{Map}
\DeclareMathOperator{\SL}{SL}
\DeclareMathOperator{\SO}{SO}
\DeclareMathOperator{\Lip}{Lip}
\DeclareMathOperator{\res}{res}
\newcommand{\VCyc}{{\mathcal{V}cyc}}
\newcommand{\calu}{\mathcal{U}}
\newcommand{\cala}{\mathcal{A}}
\renewcommand{\epsilon}{\varepsilon}
\numberwithin{equation}{section}
\newtheorem{thm}{Theorem}[section]
\newtheorem{prop}{Proposition}[section]
\newtheorem{corollary}{Corollary}[section]
\newtheorem{lemma}{Lemma}[section]
\theoremstyle{definition}
\newtheorem{assumption}{Assumption}[section]
\newtheorem{example}{Example}[section]
\newtheorem{defi}{Definition}[section]
\newtheorem{definition}{Definition}[section]
\newtheorem{rem}{Remark}[section]
\newtheorem{remark}{Remark}[section]
\let\c@lemma=\c@thm
\let\c@prop=\c@thm
\let\c@cor=\c@thm
\let\c@corollary=\c@thm
\let\c@assumption=\c@thm
\let\c@example=\c@thm
\let\c@defi=\c@thm
\let\c@definition=\c@thm
\let\c@remark=\c@thm
\let\c@rem=\c@thm
\let\c@nota=\c@thm
 \newtheoremstyle{TheoremNum}
        {}{}              
        {\itshape}                      
        {}                              
        {\bfseries}                     
        {.}                             
        { }                             
        {\thmname{#1}\thmnote{ \bfseries #3}}
\theoremstyle{TheoremNum}
\address{The University of British Columbia, Department of Mathematics, 1984 Mathematics Road, Vancouver, B.C., Canada V6T 1Z2}
\email{rueping@math.ubc.ca}
\date{\today}
\begin{document}
\begin{abstract}
We generalize the proof of the Farrell--Jones conjecture for CAT(0)-groups to a larger class of groups, for example also containing all groups that act properly and cocompactly on a finite product of hyperbolic graphs. In particular, this gives a unified proof of the Farrell--Jones conjecture for CAT(0)- and hyperbolic groups.
\end{abstract}
\subjclass[2010]{18F25, 20F67}
\keywords{Farrell--Jones conjecture, hyperbolic groups, CAT(0)-groups}
\maketitle
\section{Introduction}
The Farrell--Jones conjecture for a group $G$  says that the $K$-theoretic assembly map
\[\mathcal{H}^G_*(E_\VCyc G;\mathbf{K}_\cala)\rightarrow \mathcal{H}^G_*(pt;\mathbf{K}_\cala)=K^{alg}_*(\cala[G])\]
and the $L$-theoretic assembly map
\[\mathcal{H}^G_*(E_\VCyc G;\mathbf{L}_\cala)\rightarrow \mathcal{H}^G_*(pt ;\mathbf{L}_\cala)=L^{\langle-\infty\rangle}_*(\cala[G])\]
are isomorphisms for any additive $G$-category $\cala$ (with involution), see Bartels and Reich \cite[Conjectures~3.2~and~5.1]{coefficients}. The Farrell--Jones conjecture implies several other conjectures. More background information about the Farrell--Jones conjecture can be found in L\"uck and Reich \cite{baum}.

As in \cite[Definition~2.15]{wegner2013farrell} we say that a group $G$ satisfies the Farrell--Jones conjecture with finite wreath products if for any finite group $F$ the wreath product $G\wr F$ satisfies the $K$- and $L$-theoretic Farrell--Jones conjecture. We will use the abbreviation \FJCw{} for "Farrell--Jones conjecture with finite wreath products".

We will weaken the assumptions in the proof of the Farrell-Jones conjecture for CAT(0)-groups (see Bartels and L\"uck \cite{bartels2012borel,flow} and Wegner \cite{wegnercat}). In \autoref{sec:bicomb} we begin by defining some properties of bicombings. The assumptions of our main theorem, \autoref{thm:main}, are stated in \autoref{assumption}. We also give some examples of groups satisfying those. As one application we obtain the following result.
\begin{thm}
\label{thm:prodgraphs}
All groups acting properly and cocompactly on a finite products of hyperbolic graphs satisfy the Farrell--Jones conjecture with finite wreath products.
\end{thm}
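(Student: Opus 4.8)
\emph{Proof proposal.} The plan is to deduce the statement from \autoref{thm:main}: one has to check that every group acting properly and cocompactly on a finite product of hyperbolic graphs satisfies the hypotheses collected in \autoref{assumption}, after which \FJCw{} follows. So let $G$ act properly and cocompactly on $X=X_1\times\dots\times X_n$ with each $X_i$ a hyperbolic graph (which we may assume to be locally finite with bounded geometry; this is either part of the set-up or arranged by a standard thickening, using that the action is proper and cocompact).

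First I would make two reductions. The $G$-action may permute the factors, so I pass to the finite-index subgroup $G_0\le G$ stabilizing each factor; then every $X_i$ is a $G_0$-space and it is enough to treat $G_0$, because \FJCw{} is inherited by finite-index overgroups (via the classical embedding $G\hookrightarrow G_0\wr\Sigma_{[G:G_0]}$ together with closure of the Farrell--Jones conjecture under passage to subgroups and finite wreath products). Second, to obtain finite covering dimension together with a well-behaved equivariant bicombing, I would replace each $X_i$ by a Rips-type thickening $P_i$: it stays hyperbolic, the $G_0$-action stays proper and cocompact, $P_i$ has finite covering dimension, and, being hyperbolic, $P_i$ carries a $G_0$-equivariant bicombing with the properties introduced in \autoref{sec:bicomb}, the relevant convexity and contraction estimates holding with error controlled by the hyperbolicity constant.

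Next I would assemble the product. Equip $X=\prod_i P_i$ with a product metric (say the $\ell^1$-metric); then $G_0$ still acts properly and cocompactly by isometries, and the coordinatewise bicombings combine to a bicombing of $X$. The key point is that the properties of \autoref{sec:bicomb} are stable under finite products: each defining inequality is checked coordinate by coordinate and then summed, the finitely many errors adding up to a finite total. Likewise, the finite covering dimension required in \autoref{assumption} for $X$ (and for the associated flow space) is bounded in terms of the corresponding data of the factors, which are finite since each $P_i$ is hyperbolic. This establishes \autoref{assumption} for the $G_0$-action on $X$, and \autoref{thm:main} then yields \FJCw{} for $G_0$, hence for $G$.

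I expect the technical heart to be two points. The first is producing, for a hyperbolic graph, an honest $G_0$-equivariant bicombing satisfying the axioms of \autoref{sec:bicomb}: hyperbolic graphs are far from uniquely geodesic, so one must either invoke a general construction for hyperbolic spaces or make consistent choices (for instance via the Rips complex, or an ultralimit over an orbit). The second is verifying that the convexity-type axioms and the dimension bounds survive the passage to the $\ell^1$-product; here only finiteness, not a sharp value, is needed, so the product estimates — while requiring some care — should go through.
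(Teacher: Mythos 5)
Your high-level plan is the right one: verify \autoref{assumption} for the given action and invoke \autoref{thm:main}. But the concrete step you flag as ``the technical heart'' is exactly where the proposal has a genuine gap, and the device you suggest there does not work. You propose to replace each hyperbolic graph by a Rips-type thickening $P_i$ and assert that, ``being hyperbolic, $P_i$ carries a $G_0$-equivariant bicombing'' with the needed convexity. Hyperbolicity alone gives you nothing like a consistent, $A$-convex, equivariant bicombing: a $\delta$-hyperbolic space is typically far from uniquely geodesic, ad hoc choices of geodesics are neither continuous, nor consistent, nor equivariant, and the Rips complex of a hyperbolic group is not known to carry such a structure. Consistency in particular is a strong rigidity condition (it is what makes the flow space finite-dimensional later in the paper), and there is no way to get it ``with error controlled by the hyperbolicity constant.'' You would also not gain anything from the thickening for dimension reasons: a locally finite graph is already one-dimensional, so finite covering dimension was never the obstacle.

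The missing ingredient is the injective hull. The paper's argument (\autoref{ex:prod}) passes from each graph $T_i$ to the injective hull $E(v(T_i))$ of its vertex set and, more to the point, to the injective hull $E(v(T_1)\times\cdots\times v(T_n))$ of the whole product. By Lang's results this space is proper, finite-dimensional, within bounded distance of the orbit, and admits a conical geodesic bicombing; by Descombes--Lang, a proper space of finite combinatorial dimension with a conical geodesic bicombing has a \emph{unique} consistent convex geodesic bicombing. Uniqueness is then what gives $G$-equivariance for free, and convexity gives $A$-convexity with the affine $A$ and the Lipschitz control on path lengths, so \autoref{assumption} holds. Two further remarks on your reductions: (i) passing to the finite-index subgroup $G_0$ that preserves the factors is unnecessary, because the injective hull of the full product is functorial and carries a $G$-action even when $G$ permutes isometric factors, and the unique convex bicombing is automatically equivariant for that action; and (ii) your idea of combining bicombings on the factors is fine in principle (the paper proves a lemma to this effect for the $\ell^2$-product, and it would transfer to $\ell^1$), but it only helps once you have the bicombings on the factors, which again requires the injective hull input you are missing.
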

The Farrell--Jones conjecture is known for hyperbolic groups by work of Bartels, L\"uck and Reich \mbox{\cite{bartels2008equivariant,bartels2008k}}. But \autoref{thm:main} now gives a unified proof of the Farrell--Jones conjecture working for both hyperbolic and CAT(0)-groups. In \autoref{sec:noncat} we give an example of a group satisfying \autoref{assumption} which is not a CAT(0)-group.

In \autoref{sec:flow} and \autoref{sec:trans} we generalize several results from Bartels and L\"uck \cite{flow} about flow spaces and contracting transfers to our setting. Using this we can prove the main theorem in \autoref{sec:main}.

\textbf{Acknowledgements:} We would like to thank Arthur Bartels for helpful discussions and Martin Bridson for telling us about the example in \autoref{sec:noncat}. We also would like to thank Daniel L\"utgehetmann, Tessa Turini, Mark Ullmann and the referee for useful comments and suggestions. The first author was supported by the Max Planck Society. 
\section{Bicombings}
\label{sec:bicomb}
\begin{defi}\label{def:alot}
A \emph{constant-speed bicombing} on a metric space $X$ is a continuous function
\[\gamma\colon X\times X\times [0,1]\to X\]
such that for every $x,y\in X$ the function $\gamma_{x,y}\colon [0,1]\to X$ with $\gamma_{x,y}(t)=\gamma(x,y,t)$ is a rectifiable path of constant speed $l(\gamma_{x,y})$ from $x$ to $y$.

The bicombing $\gamma$ is called 
\begin{enumerate}
\item \emph{geodesic} if those paths are actually geodesics, i.e., $l(\gamma_{x,y})=d(x,y)$; 
\item \emph{convex} if for every $x,x',y,y'\in X$ the function
\[[0,1]\to\bbR,\qquad t\mapsto d(\gamma_{x,y}(t),\gamma_{x',y'}(t))\]
is convex;
\item \emph{$A$-convex} for a continuous function $A\colon [0,1]\times [0,\infty)\times [0,\infty)\rightarrow [0,\infty)$ with $A(1,s,0)=A(0,0,s)=0$ for all $s\geq0$ if for every $x,x',y,y'\in X$ and $t\in[0,1]$ we have
\[d(\gamma_{x,y}(t),\gamma_{x',y'}(t))\leq A(t,d(x,x'),d(y,y'));\]
\item \emph{consistent} if the chosen paths behave well under restriction, i.e. if for all $x,y\in X,s<s'\in[0,1],t\in[s,s']$ we have
\[\gamma_{x,y}(t)=\gamma_{\gamma_{x,y}(s),\gamma_{x,y}(s')}\left(\tfrac{t-s}{s'-s}\right).\]
\end{enumerate}
Convex bicombings are $A$-convex for the function $A(t,s,s')=(1-t)s+ts'$.
Let $G$ be a group acting isometrically on $X$, then the bicombing $\gamma$ is \emph{equivariant} if for every $g\in G, x,y\in X, t\in[0,1]$ we have
\[g\gamma_{x,y}(t)=\gamma_{gx,gy}(t).\]
\end{defi}
\begin{remark} In the literature it is not always assumed that a bicombing is continuous in $X\times X$. 
\end{remark}
\begin{remark}
The definition of $A$-convex is made in such a way that two chosen paths with the same endpoint eventually are close together in the following sense.
If a bicombing $\gamma$ is $A$-convex , then for every $s,\epsilon>0$ there exists $\delta>0$ such that for all $t\in[1-\delta,1],x,x',y\in X$ with $d(x,x')\leq s$ we have
\[d(\gamma_{x,y}(t),\gamma_{x',y}(t))\leq A(t,d(x,x'),0)<\epsilon.\]
\end{remark}
We define the following generalization of CAT(0)-groups. The name is motivated by the definition of a Busemann space, which is a geodesic space such that all pairs of geodesics are convex, see \cite[Section 1]{bowditch}. Note that we will only assume convexity for a bicombing.
\begin{defi}
A group $G$ is called \emph{Busemann group} if there exists a finite-dimensional, proper metric space $X$ with a cocompact, proper and isometric $G$-action and a consistent, convex, equivariant, geodesic bicombing $\gamma$ on $X$.
\end{defi}
\begin{example}
\label{ex:bus}~
\begin{enumerate}
\item Every CAT(0)-group is a Busemann group.
\item By a result of Descombes and Lang \cite[Theorem 1.3]{lang2} every hyperbolic group is a Busemann group. Namely, the action on its injective hall satisfies the assumption. See \autoref{ex:prod} for a quick review of their proof. 
\end{enumerate}
\end{example}
We will proof our main theorem under the following even more generally but also more technical assumption. In the next section we will give an example of a group satisfying it, which is not a CAT(0)-group.
\begin{assumption}
\label{assumption}
Let $G$ be a group. Assume there exists a finite-dimensional, proper metric space $X$ with a cocompact, proper and isometric $G$-action. Furthermore, assume there exists a consistent, $A$-convex, constant-speed, equivariant bicombing $\gamma$ on $X$ with $\gamma_{x,x}(t)=x$ for all $x\in X, t\in[0,1]$ and a continuous function $f\colon[0,\infty)\to[0,\infty)$ with $f(0)=0$ and $|l(\gamma_{x,y})-l(\gamma_{x',y'})|\leq f(d(x,x')+d(y,y'))$ for all $x,x',y,y'\in X$.
\end{assumption}
For the proof of the main theorem we will need certain monotonicity assumptions on the functions $A$ and $f$ appearing in \autoref{assumption}. Those can always be satisfied by the following remark.
\begin{remark} 
\label{rem:monoton}
Given a function $A$ as in the definition of $A$-convexity.

Let $B\colon [0,1]\times [0,\infty)\times[0,\infty) \rightarrow [0,\infty)$ be given by
\[(t,s,s')\mapsto\max\{A(t,r,r')\mid 0\leq r\leq s, 0\leq r'\leq s'\}.\]
We can always replace $A$ by 
\[A'\colon [0,1]\times [0,\infty)\times[0,\infty) \rightarrow [0,\infty),\]\[(t,s,s')\mapsto
\begin{cases}
\max\{B(t',s,s')\mid 0\leq t'\leq t\}&0\leq t\leq 1/3\\
\max\{B(t',s,s')\mid t\leq t'\leq 1\}&2/3\leq t\leq 1\\
\max\{B(t,s,s'),(3t-1)A'(2/3,s,s')&\\ \qquad+(2-3t)A'(1/3,s,s')\}&1/3\leq t\leq 2/3
\end{cases}.\]
So without loss of generality we can assume that $A$ is monotonically increasing in the second and third coordinate, in the first coordinate monotonically increasing on $[0,1/3]$ and decreasing on $[2/3,1]$.

By the same argument we can assume that $f$ is monotonically increasing.
\end{remark}
\begin{lemma}
Let $(X,d),(X',d')$ be metric spaces with constant-speed bicombings $\gamma$ and $\gamma'$ respectively. Then there exists a constant-speed bicombing $\overline{\gamma}$ on the product $X\times X'$ with the $l^2$-metric which is consistent if both $\gamma$ and $\gamma'$ are. If $\gamma$ and $\gamma'$ are $A$- and $A'$-convex respectively, where $A$ and $A'$ satisfy the monotonicity assumptions from \autoref{rem:monoton}, then $\overline \gamma$ is $\overline A\coloneqq(A^2+(A')^2)^{1/2}$-convex.
\end{lemma}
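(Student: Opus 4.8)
The plan is to define the product bicombing coordinatewise,
\[\overline{\gamma}_{(x,x'),(y,y')}(t) := \bigl(\gamma_{x,y}(t),\gamma'_{x',y'}(t)\bigr),\]
and to verify the asserted properties in turn; write $\overline d$ for the $l^2$-metric on $X\times X'$. Continuity of $\overline\gamma$ on $(X\times X')\times(X\times X')\times[0,1]$ is immediate from continuity of $\gamma$ and $\gamma'$, and $\overline\gamma_{(x,x'),(y,y')}$ clearly runs from $(x,x')$ to $(y,y')$, so the only point needing an argument is that each path $\overline\gamma_{(x,x'),(y,y')}$ is rectifiable of constant speed. I would obtain this by computing, for $0\le s\le u\le 1$, the length of the restriction to $[s,u]$: abbreviating $a=l(\gamma_{x,y})$, $b=l(\gamma'_{x',y'})$, and $\overline\gamma$ for $\overline\gamma_{(x,x'),(y,y')}$, for any partition $s=t_0<\dots<t_n=u$ the sum $\sum_i \overline d(\overline\gamma(t_i),\overline\gamma(t_{i+1}))$ equals $\sum_i\bigl(d(\gamma_{x,y}(t_i),\gamma_{x,y}(t_{i+1}))^2+d'(\gamma'_{x',y'}(t_i),\gamma'_{x',y'}(t_{i+1}))^2\bigr)^{1/2}$. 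Bounding each term from above by $\sqrt{a^2+b^2}\,(t_{i+1}-t_i)$ using constant speed of the factors gives $\le\sqrt{a^2+b^2}\,(u-s)$; applying the triangle inequality of the Euclidean norm on $\bbR^2$ to the vectors $\bigl(d(\gamma_{x,y}(t_i),\gamma_{x,y}(t_{i+1})),d'(\gamma'_{x',y'}(t_i),\gamma'_{x',y'}(t_{i+1}))\bigr)$ and then refining the partition — so that the two coordinate sums simultaneously approach $a(u-s)$ and $b(u-s)$ — gives the reverse inequality. Hence $l\bigl(\overline\gamma|_{[s,u]}\bigr)=\sqrt{a^2+b^2}\,(u-s)$, so $\overline\gamma_{(x,x'),(y,y')}$ has constant speed $l(\overline\gamma_{(x,x'),(y,y')})=\sqrt{l(\gamma_{x,y})^2+l(\gamma'_{x',y'})^2}$. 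This length computation is the only genuinely technical point.

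Consistency is then purely formal: assuming $\gamma$ and $\gamma'$ consistent, for $s<s'$ in $[0,1]$ and $t\in[s,s']$ one substitutes the consistency identities for both factors and uses that $\overline\gamma_{(x,x'),(y,y')}(s)=(\gamma_{x,y}(s),\gamma'_{x',y'}(s))$ and $\overline\gamma_{(x,x'),(y,y')}(s')=(\gamma_{x,y}(s'),\gamma'_{x',y'}(s'))$ to obtain $\overline\gamma_{(x,x'),(y,y')}(t)=\overline\gamma_{\overline\gamma_{(x,x'),(y,y')}(s),\overline\gamma_{(x,x'),(y,y')}(s')}\bigl(\tfrac{t-s}{s'-s}\bigr)$.

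For $\overline A$-convexity, first note that $\overline A$ is continuous, being built from $A$ and $A'$, and that $\overline A(1,s,0)=\overline A(0,0,s)=0$ because $A$ and $A'$ vanish there. Now fix points of $X\times X'$ and $t\in[0,1]$, and put $p=\overline d((x,x'),(u,u'))$ and $q=\overline d((y,y'),(v,v'))$. Using the $l^2$-metric and then the $A$-convexity of $\gamma$ and the $A'$-convexity of $\gamma'$,
\[\overline d\bigl(\overline\gamma_{(x,x'),(y,y')}(t),\overline\gamma_{(u,u'),(v,v')}(t)\bigr)\le\bigl(A(t,d(x,u),d(y,v))^2+A'(t,d'(x',u'),d'(y',v'))^2\bigr)^{1/2}.\]
Since $d(x,u),d'(x',u')\le p$ and $d(y,v),d'(y',v')\le q$, the monotonicity of $A$ and $A'$ in their last two arguments (\autoref{rem:monoton}) bounds the right-hand side by $\bigl(A(t,p,q)^2+A'(t,p,q)^2\bigr)^{1/2}=\overline A(t,p,q)$, which is precisely the $\overline A$-convexity estimate. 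Beyond the length computation in the first paragraph I do not expect any obstacle.
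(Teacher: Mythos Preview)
Your proof is correct and follows the same approach as the paper: define $\overline\gamma$ coordinatewise, then use the $l^2$-metric together with the monotonicity of $A,A'$ in their last two variables to bound by $\overline A$. The only difference is that you supply details the paper omits --- the constant-speed computation via partitions and the verification that $\overline A$ satisfies the boundary conditions $\overline A(1,s,0)=\overline A(0,0,s)=0$ --- whereas the paper simply asserts the former is ``easy to see'' and leaves the latter implicit.
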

\begin{proof}
Define 
\[\overline{\gamma}\colon (X\times X')^2\times[0,1] \rightarrow X\times X',~((x,x'),(y,y'),t)\mapsto (\gamma(x,y,t),\gamma'(x,',y',t)).\]
It is easy to see that $\overline{\gamma}$ is a constant-speed bicombing and that it is consistent if $\gamma$ and $\gamma'$ are. 
Suppose the functions $A,A'$ satisfy the monotonicity assumptions from \autoref{rem:monoton} and that $\gamma$ is $A$-convex and $\gamma'$ is $A'$-convex. Then we obtain 
\begin{align*}
& \overline{d}(\overline{\gamma}_{(x,x'),(y,y')}(t),\overline{\gamma}_{(w,w'),(z,z')}(t))\\
=~&(d(\gamma(x,y,t),\gamma(w,z,t))^2+d'(\gamma'(x',y',t),\gamma'(w',z',t))^2)^\frac{1}{2}\\
\le~& (A(t,d(x,w),d(y,z))^2+A'(t,d(x',w'),d(y',z'))^\frac{1}{2}\\
\le~& (A(t,\overline{d}((x,x'),(w,w')),\overline{d}((y,y'),(z,z')))^2\\
&+A'(t,\overline{d}((x,x'),(w,w')),\overline{d}((y,y'),(z,z')))^2)^\frac{1}{2}
\end{align*}
So $\overline{\gamma}$ is $\overline{A}$-convex for 
$\overline{A}(t,s,s')\coloneqq(A(t,s,s')^2+A'(t,s,s')^2)^\frac{1}{2}.$
\end{proof}
\begin{example}
\label{ex:prod}
Motivated by Burger and Mozes \cite{burger1997finitely} we consider groups $G$ which act properly, cocompactly and simplically on a product $T_1\times\ldots\times T_n$ of hyperbolic graphs $T_i$. Note that since the action is proper and cocompact the graphs $T_i$ are locally finite.

Let $T_i$ be $\delta_i$-hyperbolic and $v(T_i)$ be the set of vertices. By Lang \cite[Proposition 1.3]{lang} the injective hull $E(v(T_i))$ is also $\delta_i$-hyperbolic and since $v(T_i)$ is discretely geodesic every point in $E(v(T_i))$ has distance at most $\delta_i+\frac{1}{2}$ to the image of the embedding $e_i\colon v(T_i)\to E(v(T_i))$. Every discretely geodesic $\delta$-hyperbolic metric space has $(\delta+1)$-stable intervals and thus the injective hull $E(v(T_i))$ is proper and has the structure of a locally finite polyhedral complex with only finitely many isometry types of $n$-cells, isometric to injective polytopes in $l_\infty^n$ for every $n\geq 1$ by \cite[Theorem 1.1]{lang}. Combining these results we see that $E(v(T_i))$ is finite dimensional. 
When we endow the product of the injective hulls $E(v(T_i))$ with the supremum metric, then it is again injective and thus we get an isometric embedding
\[j\colon E(v(T_1)\times\ldots\times v(T_n))\to E(v(T_1))\times\ldots\times E(v(T_n)).\]
Since injective metric spaces are complete, the image is closed. From this we deduce that $E(v(T_1)\times\ldots\times v(T_n))$ is proper, finite-dimensional and within finite distance from the image of the embedding
\[e\colon v(T_1)\times\ldots\times v(T_n)\to E(v(T_1)\times\ldots\times v(T_n)).\]
By \cite[Proposition 3.8]{lang} every injective metric space admits a conical geodesic bicombing and by Descombes and Lang \cite[Theorem 1.1]{lang2} every proper metric space with a conical geodesic bicombing admits a convex geodesic bicombing. Since $E(v(T_1)\times\ldots\times v(T_n))$ is finite dimensional it has finite combinatorial dimension in the sense of Dress and thus the convex bicombing on $E(v(T_1)\times\ldots\times v(T_n))$ already is consistent and unique, i.e. it is the only convex bicombing, by \cite[Theorem 1.2]{lang2}.
The action of $G$ on $E(v(T_1)\times\ldots\times v(T_n))$ is isometric and it is proper since $E(v(T_1)\times\ldots\times v(T_n))$ is proper and within finite distance from $v(T_1)\times \ldots\times v(T_n)$. Since the convex geodesic bicombing is unique, it is equivariant with respect to the $G$-action. This shows that $G$  is a Busemann group.
\end{example}

\section{A non-CAT(0) example}
\label{sec:noncat}
Let $F$ be a hyperbolic surface, $T_1F$ the unit tangent bundle and $G\coloneqq\pi_1T_1F$. We will show that $G$ is not a CAT(0)-group but still satisfies \autoref{assumption}. 

The group $G$ fits into the non-split central extension 
\[1\to \bbZ\to G\to \pi_1F\to 1,\]
see Scott \cite[Section~4]{scott83} and Alonso and Bridson \cite[Section~8]{alonso1995semihyperbolic}. Every finite index subgroup $H$ of $\pi_1F$ is again the fundamental group of a hyperbolic surface $F'$ and the preimage of $H$ in $G$ is isomorphic to $\pi_1T_1F'$, i.e. the extension is also non-split for every finite index subgroup of $\pi_1F$. Thus $G$ is not a CAT(0)-group, see Bridson and Haefliger \cite[Theorem~II.6.12]{bh}. Note, that the Farrell--Jones conjecture for $G$ follows easily from the fact that $\pi_1(F)$ is hyperbolic and the inheritance properties of the Farrell--Jones conjecture. 

We have a covering map $T_1\mathbb{H}^2\rightarrow T_1F$ and thus both spaces have the same universal cover. Since $T_1\mathbb{H}^2$ is a fiber bundle over $\mathbb{H}^2$ with fiber $S^1$, its universal cover $X$ is a fiber bundle over $\mathbb{H}^2$ with fiber $\mathbb{R}$. Let $p$ denote the projection of $X$ to $\mathbb{H}^2$. We have that $T_1\mathbb{H}^2\cong T_1(\SL_2(\mathbb{R})/\SO_2(\mathbb{R}))\cong PSL_2(\mathbb{R})$ and thus $X\cong \widetilde{SL}_2(\bbR)$ is one of Thurston's eight three-dimensional geometries. Geodesics on $X$ have been studied by Nagy \cite{nagy1977tangent}. The upshot is that they map to paths of constant curvature in $\mathbb{H}^2$. How much they are curved depends only on the difference in the fibers. Geodesics are not unique in $X$. For this reason we will define a bicombing on $X$ that behaves much better, similar to the bicombing defined in \cite{alonso1995semihyperbolic}.

Consider a geodesic line $L\subset \mathbb{H}^2$. It is easy to check that the preimage $p^{-1}(L)$ equipped with the restriction of the left-invariant Riemannian metric on $X$ is isometric to $\mathbb{R}^2$.

Let us now define the constant speed bicombing.
Given two points $x,y\in X$ consider the unique geodesic line $L$ through $p(x),p(y)$. Let $\gamma_{x,y}$ be the geodesic between $x$ and $y$ in the plane $p^{-1}(L)$. Note that this is in general not a geodesic in $X$, since the plane is not a convex subspace. If we consider two points on such a chosen path, they still lie in the same plane $p^{-1}(L)$ and thus the chosen path between them is the restriction of the longer path. So the bicombing defined this way is consistent.

There is an interpretation for the horizontal lines, i.e. the ones which are always orthogonal to the fibers. Those are given by parallel transport, i.e. given any geodesic line $\gamma:\bbR \rightarrow \mathbb{H}^2$ and a unit tangent vector $v$ at $\gamma(0)$, then there is a unique lift $\widetilde{\gamma}:\mathbb{R}\rightarrow T_1\bbH^2$ which minimizes path length. The unit tangent vector $\widetilde{\gamma}(t)$ is given by parallel transport of $v$ along the path $\gamma|_{[0,t]}$.

Next we want to understand triangles in the upper bicombing. Parallel transport around all sides of a triangle in $\bbH^2$ does rotate a vector by the angle sum minus $\pi$, which is the area of the oriented hyperbolic triangle, see \cite[Lemma 8.4]{alonso1995semihyperbolic}. Approximating piecewise smooth curves by piecewise geodesic curves we obtain the same result for those curves, i.e. that parallel transport along a closed, piecewise smooth curve rotates a tangent vector by the oriented area that this curve encloses. 

Let $A,B\in X$ be given, let $\gamma$ be a geodesic between $A$ and $B$ and $c$ the geodesic between $p(A)$ and $p(B)$. We can identify $p^{-1}(c)$ with $[0,d(p(A),p(B))]\times\bbR$ such that $A=(0,0)$ and $B=(d(p(A),p(B)),h_B)$ for some $h_B\in\bbR$. Let $a$ be the area of the domain enclosed by $p(\gamma)$ and $c$. The isoperimetric inequality yields $a\leq l(c)+l(p(\gamma))\leq 2d(A,B)$. Let $\hat A=(d(p(A),p(B)),h)$ be the point in the fiber over $p(B)$ given by parallel transport of $A$ along $p(\gamma)$. Then $|h|=a$ and parallel transport along $p(\gamma)$ decreases the distance to $B$ in $p^{-1}(p(\gamma))$, since the Riemannian metric restricted to $p^{-1}(p(\gamma))$ is isometric to the $l^2$-metric on $[0,l(p(\gamma))]\times \bbR$. Therefore, $|h-h_B|\leq d(A,B)$ and
\[|h_B|\leq a+d(A,B)\leq 3d(A,B).\]

By construction the length of the path $\gamma_{A,B}$ is $(d(p(A),p(B))^2+h_B^2)^{1/2}\leq \sqrt{10}d(A,B)$. Let $B'\in X$ be a third point and let $h_{B'}$ be constructed analogously. 
Let $\hat B'=(d(p(A),p(B)),\hat h)$ be given by parallel transport of $B'$ along the geodesic from $p(B')$ to $p(B)$, where we still use the identification of $p^{-1}(c)$ with $[0,d(p(A),p(B))]\times\bbR$. Then as above we have $|\hat h-h_B|\leq 3d(B,B')$. Furthermore, the difference $|h_{B'}-\hat h|$ is given by the area $a$ of the hyperbolic triangle $p(A),p(B),p(B')$. 

Let $g(r)$ denote the maximal area of an (ideal) triangle in hyperbolic space $\bbH^2$ where one side has length $r$, i.e.
\[g(r)= \pi-2 \cos^{-1}(\tanh(r/2)).\]
And let $f'(x)\coloneqq(x^2+(3x+g(x))^2)^{1/2}$. Then using the calculations above we get

\begin{align*}
|l(\gamma_{A,B})-l(\gamma_{A,B'})|=~&|(d(p(A),p(B))^2+h_B^2)^{1/2}-(d(p(A),p(B'))^2+h_{B'}^2)^{1/2}|\\
=~& ((d(p(A),p(B))-d(p(A),p(B'))^2+(h_B-h_{B'})^2)^{1/2}\\
=~&(d(p(B),p(B'))^2 + (3d(B,B')+a)^2)^{1/2}\\
=~& (d(B,B')^2+(3d(B,B')+g(d(B,B')))^2)^{1/2}\\
=~&f'(d(B,B')).
\end{align*}
Setting $f(x)\coloneqq2f(x')$ we get \[|l(\gamma_{A,B})-l(\gamma_{A',B'})|\leq f'(d(A,A'))+f'(d(B,B'))\leq f(d(A,A')+d(B,B')).\]

It remains to see that the bicombing is $A$-convex for a suitable function $A$.
The length of the path from $B$ to $B'$ is given by $((d(p(B),p(B'))^2+(h_B-h_{B'}+a)^2)^{1/2}$, where $a$ denotes the area of the hyperbolic triangle $p(A),p(B),$ $p(B')$. 

Let some number $t\in [0,1]$ be given and let $C$ be the point $\gamma_{A,B}(t)$ and let $C'$ be $\gamma_{A,B'}(t)$. Using the same estimations, we get
\[l(\gamma_{C,C'})\le ((d(p(C),p(C'))^2+(th_B-th_{B'}+a_C)^2)^{1/2}\]
where $a_C$ denotes the area of the triangle $p(A),p(C),p(C')$. Since $\mathbb{H}^2$ is a CAT$(0)$-space, we get $d(p(C),p(C'))\le t \cdot d(p(B),p(B'))$.

Thus with $a(t,r)\coloneqq(t\cdot r)^2+(4t\cdot r+tg(r)+g(t\cdot r))^2)^{1/2}$ we obtain
\[d(C,C')\le l(\gamma_{C,C'})\le a(t,d(B,B')).\]
And in the same way
\[d(\gamma_{A,B'}(t),\gamma_{A',B'}(t))=d(\gamma_{B',A}(1-t),\gamma_{B',A'}(1-t))\le a(1-t,d(A,A')).\]

By the triangle inequality the bicombing is $A$-convex for the function \[A(t,x,x')\coloneqq a(t,x)+a(1-t,x')\].
\section{The flow space}
\label{sec:flow}
In this section we will define a flow space $FS(X,\gamma)$ from a bicombing $\gamma$ on $X$.
\begin{remark}
We will equip the space $\Map(\bbR,X)$ of all continuous maps from $\bbR$ into a space $X$ with the compact-open topology. If $X$ is a metric space, we can consider the closed subspace $\Lip_R(X)$ of all $R$-Lipschitz maps from $\bbR$ into $X$. The subspace topology on $\Lip_1(X)$ is induced by the metric
\[d(f,g)\coloneqq \int_\bbR \frac{d(f(t),g(t))}{2e^{|t|}}dt.\]
The $\bbR$-action $\Phi$ on $\Map(\bbR,X)$ given by $\Phi_t(f)=f(\_+t)$ restricts to $\Lip_1(X)$.
\end{remark}
\begin{defi}
\label{def:flow}
Let $G$ be a (discrete, countable) group. A \emph{flow space} for $G$ is a metric space $FS$ together with a continuous action of $G\times \bbR$, such that the action of $G$ on $FS$ is isometric and proper. We call a flow space $FS$ cocompact if the $G$ action on $FS$ is cocompact.
\end{defi}
\begin{defi}
Following the definition of a generalized geodesic from Bartels and L\"uck \cite[Definition 1.1]{flow} we call a continuous function
\[p\colon \bbR \rightarrow X\]
a \emph{trail} if there are numbers $p_-,p_+\in[-\infty,\infty]$
such that $p$ is a path of speed 1 on $[p_-,p_+]$ and locally constant on the complement.
\end{defi}

If the trail $p$ is not constant, than the numbers $p_-,p_+$ are uniquely determined by $p$.

\begin{rem}
In general the space of all trails need not be a closed subspace of $\Lip_1(X)$, i.e. the limit of a sequence of paths of speed one could have speed less than one. In \autoref{lem:res} we show that this problem does not arise when we consider the space of trails coming from a bicombing as in the next definition if the bicombing is consistent.

Another problem that arises in the general setting which is not present in the CAT(0)-case is that the space of all trails is in general not finite dimensional even if the space $X$ is. We will now show how to construct a finite-dimensional flow space $FS(X,\gamma)$ starting from the conditions in \autoref{assumption}. The main ingredient for the constructed flow space to be finite-dimensional is again the consistency of the bicombing.
\end{rem}

\begin{definition}\label{def:flowspace} Given a constant speed bicombing $\gamma:X\times X\times [0,1]\rightarrow X$, we can consider the space consisting of all trails $c_{x,y}$ of the form
\[c_{x,y}\coloneqq \begin{cases} x& t\le 0 \\ \gamma_{x,y}(t/l(\gamma_{x,y})) & t\in [0,l(\gamma_{x,y})] \\ y & t\ge l(\gamma_{x,y})\end{cases},\]
where $x,y$ are some points in $X$. The flow space $FS(X,\gamma)$ is the closure of $A(X,\gamma)\coloneqq\{\Phi_tc_{x,y}\mid t\in \IR,x,y\in X\}$ in the space $\Map(\bbR,X)$.
\end{definition}
Note that $A(X,\gamma)$ is contained in the space $\Lip_1(X)$ of all $1$-Lipschitz maps $\bbR \to X$ and thus its closure $FS(X,\gamma)$ is also contained in $\Lip_1(X)$. We will restrict its metric to $FS(X,\gamma)$.

In general the map that assigns to the pair $(x,y)$ the path above need not be continuous, as the path length can be discontinuous viewed as a map from $\Map([0,1],X)$ with the compact open topology to $\bbR$. The existence of a continuous function $f\colon[0,\infty)\to[0,\infty)$ with  $|l(\gamma_{x,y})-l(\gamma_{x',y'})|\leq f(d(x,x')+d(y,y'))$ for all $x,x',y,y'\in X$ and $f(0)=0$ as in \autoref{assumption} ensures continuity.

If $X$ has an (isometric) $G$-action and $\gamma$ is equivariant, then $FS(X,\gamma)$ also inherits an (isometric) $G$-action.

\begin{lemma}~
\label{lem:1.4}\begin{enumerate} \item 
Let $c,d\in \Lip_1(X)$ be given. For all $t_0 \in \bbR$
\[d_X(c(t_0), d(t_0))\le e^{|t_0|}\cdot d_{\Lip_1(X)}(c,d)+2.\]

\item For $c,d\in \Lip_1(X)$, $\sigma,\tau \in \bbR$ we have
\[d_{\Lip_1(X)}(\Phi_\tau(c),\Phi_\sigma(d))\le e^{|\tau|}d_{\Lip_1(X)}(c,d)+|\sigma-\tau|.\]
\end{enumerate}
\end{lemma}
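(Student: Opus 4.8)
The plan is to prove both statements by the same elementary device: estimate the integrand pointwise and use the fact that the weight $\tfrac{1}{2e^{|t|}}$ integrates to $1$ over $\bbR$. For part (1), fix $t_0\in\bbR$ and let $c,d\in\Lip_1(X)$. For any $t$ in the interval between $t_0$ and $t_0\pm 1$ (say $[t_0,t_0+1]$, with the symmetric choice if $t_0<0$, chosen so that $|t|\le |t_0|+1$ throughout), the $1$-Lipschitz property gives $d_X(c(t_0),c(t))\le |t-t_0|\le 1$ and likewise for $d$, so by the triangle inequality $d_X(c(t_0),d(t_0))\le d_X(c(t),d(t))+2$. Multiplying by the weight and integrating over that unit-length interval yields
\[
d_X(c(t_0),d(t_0)) \le \int_{t_0}^{t_0+1} \frac{d_X(c(t_0),d(t_0))}{2e^{|t|}/(2e^{|t|})}\,\text{(bound)} \cdots
\]
— more cleanly: since $\int_{t_0}^{t_0+1}\tfrac{dt}{2e^{|t|}}$ is \emph{not} $1$, I instead bound $\tfrac{1}{2e^{|t|}}\ge \tfrac{1}{2e^{|t_0|+1}}$ on that interval, so
\[
d_{\Lip_1(X)}(c,d)\ge \int_{t_0}^{t_0+1}\frac{d_X(c(t),d(t))}{2e^{|t|}}\,dt \ge \frac{1}{2e^{|t_0|+1}}\int_{t_0}^{t_0+1}\bigl(d_X(c(t_0),d(t_0))-2\bigr)\,dt,
\]
which rearranges to $d_X(c(t_0),d(t_0)) \le 2e^{|t_0|+1}\,d_{\Lip_1(X)}(c,d)+2$. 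This is slightly weaker than the claimed constant $e^{|t_0|}$; to recover the sharp bound one integrates over the full line using the exact identity $\int_\bbR \tfrac{dt}{2e^{|t|}}=1$ together with the refined pointwise estimate $d_X(c(t),d(t))\le e^{|t-t_0|}d_X(c(t_0),d(t_0)) \cdots$ — wait, that goes the wrong way; the honest route is the reverse Lipschitz bound $d_X(c(t_0),d(t_0))\le d_X(c(t),d(t))+2|t-t_0|$, giving
\[
d_X(c(t_0),d(t_0)) = \int_\bbR \frac{d_X(c(t_0),d(t_0))}{2e^{|t|}}\,dt \le \int_\bbR \frac{d_X(c(t),d(t))+2|t-t_0|}{2e^{|t|}}\,dt,
\]
and then bounding $\int_\bbR \tfrac{2|t-t_0|}{2e^{|t|}}\,dt$ and $\int_\bbR\tfrac{d_X(c(t),d(t))}{2e^{|t|}}\,dt$ by a factor of $e^{|t_0|}$ times the analogous integral centered appropriately. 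I would carry out this shift-of-center bookkeeping carefully to land exactly on the stated constants; that is the one place where a clean computation is genuinely needed.

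For part (2), I would first reduce to the case $\sigma=\tau$, because once we know $d_{\Lip_1(X)}(\Phi_\tau c,\Phi_\tau d)\le e^{|\tau|}d_{\Lip_1(X)}(c,d)$, the triangle inequality together with the estimate $d_{\Lip_1(X)}(\Phi_\tau d,\Phi_\sigma d)\le |\sigma-\tau|$ — valid because $\Phi_\tau d$ and $\Phi_\sigma d$ are reparametrizations of the same $1$-Lipschitz path differing by a shift of $|\sigma-\tau|$, so $d_X((\Phi_\tau d)(t),(\Phi_\sigma d)(t))=d_X(d(t+\tau),d(t+\sigma))\le|\sigma-\tau|$ and one integrates against the weight-$1$ measure — gives the claim. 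For the equal-shift case, write out
\[
d_{\Lip_1(X)}(\Phi_\tau c,\Phi_\tau d)=\int_\bbR \frac{d_X(c(t+\tau),d(t+\tau))}{2e^{|t|}}\,dt = \int_\bbR \frac{d_X(c(s),d(s))}{2e^{|s-\tau|}}\,ds,
\]
and compare $e^{-|s-\tau|}$ with $e^{-|s|}$: since $|s-\tau|\ge |s|-|\tau|$ we have $e^{-|s-\tau|}\le e^{|\tau|}e^{-|s|}$ pointwise, and the estimate follows immediately.

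The main obstacle is purely one of constants, not of ideas: the pointwise comparisons are trivial, but matching the \emph{exact} right-hand sides $e^{|t_0|}\cdot d_{\Lip_1(X)}(c,d)+2$ and $e^{|\tau|}d_{\Lip_1(X)}(c,d)+|\sigma-\tau|$ requires choosing the integration domains and the intermediate "recentering" of the exponential weight with some care, in particular using that $\int_\bbR \tfrac{dt}{2e^{|t|}}=1$ and $\int_0^\infty \tfrac{t}{e^t}\,dt=1$ so that the additive error lands exactly at $2$. I expect no difficulty beyond keeping track of absolute values of shifts.
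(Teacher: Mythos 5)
Part (2) of your argument is correct and complete: the triangle-inequality split, the pointwise estimate $d_X(d(t+\tau),d(t+\sigma))\le|\sigma-\tau|$ integrated against the unit-mass weight, and the substitution combined with $e^{-|s-\tau|}\le e^{|\tau|}e^{-|s|}$ deliver exactly the stated bound.

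Part (1), however, has a genuine gap and the computation you defer does not close along the route you sketch. Integrating the reverse-Lipschitz bound $d_X(c(t_0),d(t_0))\le d_X(c(t),d(t))+2|t-t_0|$ over all of $\bbR$ against the weight gives
\[
d_X(c(t_0),d(t_0))\ \le\ d_{\Lip_1(X)}(c,d)+\int_\bbR\frac{|t-t_0|}{e^{|t|}}\,dt,
\]
and the last integral evaluates to $2|t_0|+2e^{-|t_0|}$, which is not $\le 2$; ``recentering'' the weight does not rescue this, since the identity $\int_0^\infty te^{-t}\,dt=1$ that would give the additive $2$ only applies with the weight centered at $t_0$, and the factor $e^{|t_0|}$ one pays to move the weight there also hits the first term and turns it into a shifted integral that is no longer $d_{\Lip_1(X)}(c,d)$. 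Every combination of these manipulations yields a bound such as $d_{\Lip_1(X)}(c,d)+2e^{|t_0|}$, never $e^{|t_0|}d_{\Lip_1(X)}(c,d)+2$, and the exact placement of the exponential is what is used later: in the proof of \autoref{lem:A-findim} the estimate $e^{|t|-2R}+2\le 3$ for $|t|\le 2R$ and $d(c,c')<e^{-2R}$ is needed, whereas $d_{\Lip_1(X)}(c,c')+2e^{|t|}\approx 2e^{2R}$ would be useless.

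The missing idea is a \emph{truncation of the integration domain}. Set $D_0=d_X(c(t_0),d(t_0))$ and assume $D_0>2$ (else the bound is trivial). By $1$-Lipschitzness $d_X(c(t),d(t))\ge D_0-2|t-t_0|$, which is positive precisely for $|t-t_0|<D_0/2$. Restricting to that interval and using the triangle inequality $|t|\le|t_0|+|t-t_0|$ to compare weights,
\begin{align*}
d_{\Lip_1(X)}(c,d)\ &\ge\ \int_{|t-t_0|<D_0/2}\frac{D_0-2|t-t_0|}{2e^{|t|}}\,dt\ \ge\ e^{-|t_0|}\int_0^{D_0/2}(D_0-2s)\,e^{-s}\,ds\\
&=\ e^{-|t_0|}\bigl(D_0-2+2e^{-D_0/2}\bigr)\ \ge\ e^{-|t_0|}(D_0-2),
\end{align*}
which rearranges to the claim. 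The paper itself gives no proof here and simply cites Bartels--L\"uck \cite{flow}; the truncation above is the argument found there.
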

\begin{proof}The lemma is essentially \cite[Lemma~1.3 and Lemma 1.4]{flow} and the proof there carries over to the setting of $1$-Lipschitz maps.
\end{proof}

\begin{lemma}
\label{lem:lipproper}
If $X$ is a proper metric space, then the space $\Lip_1(X)$ with the metric given above is also proper.
\end{lemma}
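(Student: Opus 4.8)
The plan is to show that every closed ball in $\Lip_1(X)$ is compact by an Arzel\`a--Ascoli argument, using that the values of a bounded family of $1$-Lipschitz maps stay in a bounded — hence, since $X$ is proper, relatively compact — subset of $X$. Since $\Lip_1(X)$ is metrizable it suffices to prove sequential compactness of a closed ball $\overline{B}(c_0,r)\subseteq\Lip_1(X)$; assuming it is nonempty, fix a sequence $(c_n)_{n\in\bbN}$ in it.

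First I would establish pointwise relative compactness. By \autoref{lem:1.4}(1), for every $t_0\in\bbR$ and every $n$ we have $d_X(c_n(t_0),c_0(t_0))\le e^{|t_0|}r+2$, so $\{c_n(t_0)\mid n\in\bbN\}$ is contained in the closed ball of radius $e^{|t_0|}r+2$ about $c_0(t_0)$ in $X$, which is compact because $X$ is proper. Since the $c_n$ are all $1$-Lipschitz, the family is equicontinuous, so the Arzel\`a--Ascoli theorem for maps from the $\sigma$-compact locally compact space $\bbR$ into the metric space $X$ produces a subsequence $(c_{n_k})$ converging uniformly on every compact subset of $\bbR$ to some map $c\colon\bbR\to X$. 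The limit is again $1$-Lipschitz, so $c\in\Lip_1(X)$.

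It remains to upgrade uniform convergence on compacta to convergence in the metric of $\Lip_1(X)$. Given $\epsilon>0$, split the defining integral $\int_\bbR \frac{d_X(c_{n_k}(t),c(t))}{2e^{|t|}}\,dt$ at $\pm T$. On $[-T,T]$ the integrand tends to $0$ uniformly in $t$ and is controlled by the finite measure $\tfrac12 e^{-|t|}\,dt$, so this part is $<\epsilon/2$ for $k$ large. On the complement, by the triangle inequality and \autoref{lem:1.4}(1) one has $d_X(c_{n_k}(t),c(t))\le 2|t|+d_X(c_{n_k}(0),c(0))\le 2|t|+2(r+2)$, which is integrable against $\tfrac12 e^{-|t|}\,dt$; hence choosing $T$ large first makes the tail $<\epsilon/2$ uniformly in $k$. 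Therefore $d_{\Lip_1(X)}(c_{n_k},c)\to 0$, and since $\overline{B}(c_0,r)$ is closed we get $c\in\overline{B}(c_0,r)$. Thus every closed ball in $\Lip_1(X)$ is sequentially compact, i.e.\ $\Lip_1(X)$ is proper.

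The step I expect to require the most care is the passage from $C_{\mathrm{loc}}$-convergence to convergence in the integral metric, together with the correct form of Arzel\`a--Ascoli for targets that are merely proper metric spaces rather than $\bbR^n$; both hinge on the uniform bound of \autoref{lem:1.4}(1), which is what keeps the values $c_n(t)$ in a compact part of $X$ and makes the tail of the integral uniformly small.
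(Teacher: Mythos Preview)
Your proof is correct and follows essentially the same Arzel\`a--Ascoli approach as the paper: both use \autoref{lem:1.4}(1) for pointwise relative compactness, equicontinuity from the $1$-Lipschitz condition, and then Arzel\`a--Ascoli. The only difference is that the paper invokes the earlier remark that the metric on $\Lip_1(X)$ induces the compact-open topology, so compactness in that topology is immediately compactness in the metric, whereas you spell out by hand the passage from locally uniform convergence to convergence in the integral metric.
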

\begin{proof}
Let $f\in \Lip_1(X), R\in\bbR$ be given. By \autoref{lem:1.4} for all $t\in\bbR$ we have $\{f'(t)\mid f'\in B_R(f)\}\subseteq B_{e^{|t|}R+2}(f(t))$ and since $X$ is proper, its closure is compact. Furthermore, $B_R(f)$ is a subspace of $\Lip_1(X)$ and thus equicontinuous. By a version of the Arzela-Ascoli theorem (\cite[Theorem 47.1]{munkres}) the closure of $B_R(f)$ is compact.
\end{proof}

\begin{corollary}
The space $FS(X,\gamma)$ is proper, since it is a closed subspace of a proper metric space and the metric is just the restriction.
\end{corollary}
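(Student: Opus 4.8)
The plan is to deduce this directly from \autoref{lem:lipproper} together with the elementary fact that a closed subset of a proper metric space, equipped with the restricted metric, is again proper.

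First I would recall from \autoref{def:flowspace} that $FS(X,\gamma)$ is the closure of $A(X,\gamma)$ inside $\Map(\bbR,X)$, and that $A(X,\gamma)\subseteq\Lip_1(X)$. Since $\Lip_1(X)$ is a closed subspace of $\Map(\bbR,X)$, the closure of $A(X,\gamma)$ in $\Map(\bbR,X)$ is already contained in $\Lip_1(X)$. Hence $FS(X,\gamma)$ is a closed subset of $\Lip_1(X)$, and its metric is by construction the restriction of the metric on $\Lip_1(X)$.

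Next, because $X$ is proper by \autoref{assumption}, \autoref{lem:lipproper} shows that $\Lip_1(X)$ is proper, i.e.\ all of its closed balls are compact. It then remains to check the general statement: if $Z$ is a proper metric space and $Y\subseteq Z$ is closed with the restricted metric, then $Y$ is proper. Indeed, for $y\in Y$ and $R\ge 0$ the closed ball $\overline{B}^Y_R(y)$ equals $\overline{B}^Z_R(y)\cap Y$; since $Z$ is proper, $\overline{B}^Z_R(y)$ is compact, and the intersection of a compact set with a closed set is compact. Applying this with $Z=\Lip_1(X)$ and $Y=FS(X,\gamma)$ completes the argument.

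I do not expect any real obstacle here: the only point requiring a moment of care is that the closure defining $FS(X,\gamma)$ is taken in $\Map(\bbR,X)$ rather than in $\Lip_1(X)$, but this is harmless precisely because $\Lip_1(X)$ is itself closed in $\Map(\bbR,X)$.
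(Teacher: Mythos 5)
Your argument is correct and is exactly the one the paper has in mind: $FS(X,\gamma)$ is a closed subset of the proper metric space $\Lip_1(X)$ (via \autoref{lem:lipproper}), and a closed subset of a proper metric space with the restricted metric is proper. The only elaboration beyond what the paper states inline in the corollary is your careful remark that the closure taken in $\Map(\bbR,X)$ already lands in $\Lip_1(X)$, which the paper had noted just after \autoref{def:flowspace}.
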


\begin{corollary}\label{cor:evproper}
The evaluation map 
\[ev_0\colon FS(X,\gamma)\rightarrow X,\qquad f\mapsto f(0)\]
is proper.
\end{corollary}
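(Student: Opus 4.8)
The plan is to exploit that $FS(X,\gamma)$ is a proper metric space by the preceding corollary, so that a subset is compact if and only if it is closed and bounded. Hence, given a compact set $K\subseteq X$, it suffices to show that the preimage $ev_0^{-1}(K)$ is closed and bounded in $FS(X,\gamma)$.

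For closedness I would first argue that $ev_0$ is continuous. On $\Lip_1(X)$, and hence on the subspace $FS(X,\gamma)$, the metric topology coincides with the compact-open topology by the remark at the beginning of this section, and evaluation at a point is continuous for the compact-open topology since $\bbR$ is locally compact Hausdorff. Alternatively one checks directly that $d_{\Lip_1(X)}(f_n,f)\to 0$ forces $f_n(0)\to f(0)$: all the maps are $1$-Lipschitz, so a lower bound $d_X(f_n(0),f(0))\ge\epsilon$ yields $d_X(f_n(t),f(t))\ge\epsilon/2$ for $|t|\le\epsilon/4$ and therefore a positive lower bound on $d_{\Lip_1(X)}(f_n,f)$, a contradiction. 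In particular $ev_0^{-1}(K)$ is closed in $FS(X,\gamma)$.

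For boundedness I would treat the case $K\neq\emptyset$ (otherwise the preimage is empty), fix a point $x_0\in K$ and set $D\coloneqq\sup_{x\in K}d_X(x_0,x)<\infty$. If $f\in ev_0^{-1}(K)$, then $f(0)\in K$, and since $f$ is $1$-Lipschitz we obtain $d_X(f(t),x_0)\le |t|+D$ for all $t\in\bbR$. Consequently, for $f,g\in ev_0^{-1}(K)$,
\[d_{\Lip_1(X)}(f,g)=\int_\bbR\frac{d_X(f(t),g(t))}{2e^{|t|}}\,dt\le\int_\bbR\frac{2|t|+2D}{2e^{|t|}}\,dt<\infty,\]
which is a finite bound independent of $f$ and $g$. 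Thus $ev_0^{-1}(K)$ is bounded, and being a closed bounded subset of the proper metric space $FS(X,\gamma)$ it is compact. This completes the argument; I do not foresee a serious obstacle, the only point requiring a little care being the continuity of $ev_0$ with respect to the integral metric rather than the pointwise topology, which the $1$-Lipschitz estimate above handles.
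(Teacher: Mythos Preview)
Your proof is correct and follows essentially the same approach as the paper: both argue that $ev_0$ is continuous, that the preimage of a compact (or bounded) set is bounded via the $1$-Lipschitz estimate $d_X(f(t),g(t))\le 2|t|+2D$ integrated against $e^{-|t|}$, and then invoke properness of $FS(X,\gamma)$ to conclude. The only difference is cosmetic---you work directly with a general compact $K$ and spell out continuity in more detail, whereas the paper reduces to balls and asserts continuity without elaboration.
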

\begin{proof}
By the definition of the metric on $FS$, the evaluation map $ev_0$ is continuous.
The preimage of an $R$-ball around a point $x$ consists of 1- Lipschitz maps $f$ with $f(0)\in B_R(x)$. For any two such maps we have
\[d_{FS}(f,f')\le \int_\bbR \frac{2R+|2t|}{2e^{|t|}}dt=:R'<\infty.\]
Thus the preimage is contained in a ball of radius $R'$ around any of its points. This ball is compact by the last lemma. 
\end{proof}

\begin{corollary} \label{cor:cocompactac}
If $X$ has a cocompact, isometric $G$-action and the bicombing $\gamma$ is equivariant, then the induced $G$-action on $FS(X,\gamma)$ is also cocompact.
\end{corollary}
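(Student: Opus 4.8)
The plan is to build a compact subset $K \subseteq FS(X,\gamma)$ whose $G$-translates cover $FS(X,\gamma)$, using the cocompactness of the $G$-action on $X$ together with the previously established properness results. First I would fix a compact subset $C \subseteq X$ with $GC = X$, which exists since the $G$-action on $X$ is cocompact. The natural candidate for a compact generating set of $FS(X,\gamma)$ is the set of trails that "pass through $C$ at time $0$", i.e.\ I would like to show that every element $f \in FS(X,\gamma)$ can be translated by some $g \in G$ so that $(gf)(0) \in C$, or at least lands in a fixed bounded neighbourhood of $C$.

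The key steps, in order, are as follows. Step one: show that $ev_0 \colon FS(X,\gamma) \to X$ is $G$-equivariant — this is immediate from the definition of the $G$-action on $FS(X,\gamma)$, which is $(g \cdot f)(t) = g(f(t))$, so $ev_0(g \cdot f) = g \cdot ev_0(f)$. Step two: consider $K \coloneqq ev_0^{-1}(C)$. By \autoref{cor:evproper} the map $ev_0$ is proper, and $C$ is compact, so $K$ is a closed (hence, inside the proper space $FS(X,\gamma)$, in fact compact once we know it is bounded) subset; more directly, $ev_0^{-1}(C)$ is compact because $ev_0$ is proper and $C$ is compact. Step three: check that $G \cdot K = FS(X,\gamma)$. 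Given any $f \in FS(X,\gamma)$, the point $f(0) \in X$ lies in $gC$ for some $g \in G$ since $GC = X$; then $(g^{-1} f)(0) = g^{-1}(f(0)) \in C$, so $g^{-1} f \in K$ and $f = g(g^{-1}f) \in G \cdot K$. Hence $K$ is a compact set with $GK = FS(X,\gamma)$, which is exactly cocompactness of the $G$-action on $FS(X,\gamma)$.

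I do not expect any serious obstacle here; the corollary is essentially a formal consequence of the properness of $ev_0$ (\autoref{cor:evproper}) and its equivariance. The only point requiring a little care is confirming that the $G$-action on $FS(X,\gamma)$ is the expected one — namely that equivariance of $\gamma$ gives $g \cdot c_{x,y} = c_{gx, gy}$ and that this action is compatible with the flow $\Phi$ and passes to the closure — but this was already recorded in the remark preceding \autoref{lem:1.4} that $FS(X,\gamma)$ inherits an isometric $G$-action. So the proof reduces to: pull back a compact fundamental domain for $X$ along the proper equivariant map $ev_0$.
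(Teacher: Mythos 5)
Your proposal is correct and follows exactly the argument in the paper: fix a compact $K\subseteq X$ with $GK=X$, note that $ev_0$ is proper by \autoref{cor:evproper} and $G$-equivariant, so $ev_0^{-1}(K)$ is compact and $G\cdot ev_0^{-1}(K)=ev_0^{-1}(X)=FS(X,\gamma)$. The extra verifications you flag (equivariance of $ev_0$, that $FS(X,\gamma)$ carries the induced $G$-action) are sound and left implicit in the paper.
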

\begin{proof}
Pick a compact set $K\subset X$ with $GK=X$. Then $ev_0^{-1}(K)$ is also compact and $G\cdot ev_0^{-1}(K)=ev_0^{-1}(X)=FS(X,\gamma).$
\end{proof}

\begin{lemma} \label{lem:properac}
If $X$ has a proper, isometric $G$-action and the bicombing $\gamma$ is equivariant, then the induced $G$-action on $FS(X,\gamma)$ is also proper.
\end{lemma}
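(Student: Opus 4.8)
The plan is to deduce properness of the $G$-action on $FS(X,\gamma)$ from properness of the $G$-action on $X$ by pushing compact sets forward along the evaluation map $ev_0$. First I would recall that, since $\gamma$ is equivariant, the $G$-action on $FS(X,\gamma)$ is the pointwise one, $(g\cdot f)(t)=g\bigl(f(t)\bigr)$, so that $ev_0\colon FS(X,\gamma)\to X$, $f\mapsto f(0)$, is $G$-equivariant: $ev_0(g\cdot f)=g\bigl(f(0)\bigr)=g\cdot ev_0(f)$.

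Next, since $G$ is discrete and the spaces $X$ and $FS(X,\gamma)$ are locally compact — the latter because it is a closed subspace of the proper metric space $\Lip_1(X)$ by \autoref{lem:lipproper} — I would use the standard fact that properness of such an action is equivalent to: for every compact $K$ the set $\{g\in G\mid gK\cap K\neq\emptyset\}$ is finite. So let $K\subseteq FS(X,\gamma)$ be compact. Then $L\coloneqq ev_0(K)\subseteq X$ is compact, because $ev_0$ is continuous. If $g\in G$ satisfies $gK\cap K\neq\emptyset$, choosing $f\in K$ with $g\cdot f\in K$ gives $ev_0(f)\in L$ and $g\cdot ev_0(f)=ev_0(g\cdot f)\in L$, hence $gL\cap L\neq\emptyset$. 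Therefore $\{g\in G\mid gK\cap K\neq\emptyset\}\subseteq\{g\in G\mid gL\cap L\neq\emptyset\}$, and the right-hand set is finite because the $G$-action on $X$ is proper; this finishes the argument.

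I do not expect a real obstacle: the whole proof rests only on equivariance of $\gamma$ (which is what makes the $G$-action on $FS(X,\gamma)$ and the equivariance of $ev_0$ available) together with the assumed properness of the $G$-action on $X$. Note in particular that only continuity of $ev_0$ enters, not its properness from \autoref{cor:evproper}. The sole point deserving a careful sentence is the equivalence, for discrete group actions on locally compact Hausdorff spaces, between ``proper action'' in the sense of \autoref{def:flow} and the condition that every compact set meets only finitely many of its translates.
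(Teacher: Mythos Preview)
Your argument is correct and is essentially the same as the paper's: push a compact set $K\subseteq FS(X,\gamma)$ forward along the $G$-equivariant evaluation $ev_0$ and use that $\{g\mid gK\cap K\neq\emptyset\}\subseteq\{g\mid g\cdot ev_0(K)\cap ev_0(K)\neq\emptyset\}$, which is finite by properness of the action on $X$. Your additional remarks (explicit equivariance of $ev_0$, and that only continuity of $ev_0$ is needed rather than its properness) are accurate elaborations of the same proof.
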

\begin{proof}
We have an equivariant map $ev_0:FS(X,\gamma)\rightarrow X$ to a space with a proper $G$- action. For any compact subset $K\subset FS(X,\gamma)$, we have that 
\[\{g\in G \mid gK\cap K\neq \emptyset\}\subset \{g\in G\mid g\cdot ev_0(K)\cap ev_0(K)\neq \emptyset\}\]
and the latter is finite. Thus the $G$-action on $FS(X)$ is also proper. 
\end{proof}

\begin{remark}~\begin{enumerate}
\item Let $\Lip_1(X,[a,b])$ denote the subset of $\Lip_1(X)$ consisting of all maps that are locally constant on the complement of the interval $[a,b]$. We have a retraction 
\[\res_{[a,b]}:\Lip_1(X)\rightarrow \Lip_1(X,[a,b]),\qquad f\mapsto \left( t\mapsto \begin{cases} f(a)& t\le a\\ f(t) & t\in [a,b]\\ f(b) & t\ge b \end{cases} \right).\]
The continuity of this map can be easily verified. Since
\[\Lip_1(X,[a,b])=\{f\in \Lip_1(X)\mid\res_{[a,b]}(f)=f\}\]
is a retract of a Hausdorff space, $\Lip_1(X,[a,b])$ is a closed subspace.
\item Let $A(X,\gamma,[a,b])\coloneqq A(X,\gamma)\cap\Lip_1(X,[a,b])$. If $\gamma$ is consistent, we can restrict the upper retraction to
\[\res_{[a,b]}:A(X,\gamma)\rightarrow A(X,\gamma,[a,b]).\]
\end{enumerate}
\end{remark}

\begin{lemma}
\label{lem:resclosed}
The subspace $A(X,\gamma,[a,b])\subseteq\Lip_1(X)$ is closed.
\end{lemma}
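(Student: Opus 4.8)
The plan is to show that $A(X,\gamma,[a,b])$ is closed in $\Lip_1(X)$ by characterizing its points intrinsically and then verifying that this characterization is preserved under limits. A map $f \in A(X,\gamma,[a,b])$ is, by definition, of the form $\Phi_t c_{x,y}$ for some $x,y \in X$ and $t \in \bbR$, and in addition lies in $\Lip_1(X,[a,b])$, i.e.\ is locally constant outside $[a,b]$. Writing $L \coloneqq l(\gamma_{x,y})$, the map $\Phi_t c_{x,y}$ traces out the reparametrized bicombing path on the interval $[-t, L-t]$ and is constant on either side. So an element of $A(X,\gamma,[a,b])$ is characterized by: there exist $x = f(a')$ and $y = f(b')$ with $a \le a' \le b' \le b$ such that $f(s) = \gamma_{x,y}\!\big(\tfrac{s - a'}{b' - a'}\big)$ for $s \in [a',b']$ (with the convention that $f$ is constant $= x$ on $(-\infty, a']$ and $= y$ on $[b', \infty)$), and moreover $f$ is a unit-speed path on $[a',b']$; here $L = b' - a' = d$-length. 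Actually I would phrase it more cleanly: $f \in A(X,\gamma,[a,b])$ iff there is a closed subinterval $[\alpha,\beta] \subseteq [a,b]$ (possibly degenerate) with $f$ constant on $(-\infty,\alpha]$ and on $[\beta,\infty)$, $f$ of speed one on $[\alpha,\beta]$, and $f(s) = \gamma_{f(\alpha),f(\beta)}((s-\alpha)/(\beta-\alpha))$ for all $s \in [\alpha,\beta]$.

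The key steps, in order, would be: (1) Take a sequence $f_n \to f$ in $\Lip_1(X)$ with $f_n \in A(X,\gamma,[a,b])$, say $f_n = \Phi_{t_n} c_{x_n,y_n}$, with associated subintervals $[\alpha_n,\beta_n] \subseteq [a,b]$ where the path is active; write $x_n = f_n(\alpha_n)$, $y_n = f_n(\beta_n)$, $L_n = \beta_n - \alpha_n$. (2) Since $[a,b]$ is compact, after passing to a subsequence $\alpha_n \to \alpha$, $\beta_n \to \beta$ with $[\alpha,\beta] \subseteq [a,b]$; by \autoref{lem:1.4}(1) the pointwise values converge, so $x_n = f_n(\alpha_n) \to f(\alpha) =: x$ and $y_n = f_n(\beta_n) \to f(\beta) =: y$ (using equicontinuity of $\Lip_1$ maps to pass from $f_n(\alpha_n)$ to $f(\alpha)$, and properness of $X$ to extract convergent subsequences, or just that $\Lip_1(X)$ is proper by \autoref{lem:lipproper} so bounded sequences subconverge). (3) Pass to the limit in the defining identity $f_n(s) = \gamma_{x_n,y_n}((s-\alpha_n)/(\beta_n - \alpha_n))$: by continuity of $\gamma$ and of the reparametrization (on the non-degenerate case $\alpha < \beta$), and using $|l(\gamma_{x_n,y_n}) - l(\gamma_{x,y})| \le f(d(x_n,x) + d(y_n,y)) \to 0$ to control that $L_n \to L := l(\gamma_{x,y}) = \beta - \alpha$, one obtains $f(s) = \gamma_{x,y}((s-\alpha)/(\beta-\alpha))$ on $[\alpha,\beta]$, and $f$ is constant outside. (4) Handle the degenerate limit $\alpha = \beta$ separately: then $f$ is globally constant, which is $c_{x,x}$ since $\gamma_{x,x}(t) = x$ by \autoref{assumption}, hence lies in $A(X,\gamma,[a,b])$. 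In all cases $f = \Phi_{-\alpha} c_{x,y} \in A(X,\gamma)$ and $f$ is locally constant off $[\alpha,\beta] \subseteq [a,b]$, so $f \in A(X,\gamma,[a,b])$.

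The main obstacle I anticipate is step (3), specifically controlling the reparametrization $(s - \alpha_n)/(\beta_n - \alpha_n)$ and the length $L_n$ simultaneously: a priori the active interval $[\alpha_n,\beta_n]$ could have length $L_n = \beta_n - \alpha_n$ bounded away from, or collapsing to, zero in ways not matched by $d(x_n, y_n)$, and one needs $L_n = l(\gamma_{x_n,y_n})$ together with the continuity estimate for $l$ from \autoref{assumption} to guarantee $L_n \to l(\gamma_{x,y})$ and hence $\beta - \alpha = l(\gamma_{x,y})$ consistently. The continuity of the reparametrization map $(s,\alpha_n,\beta_n) \mapsto (s-\alpha_n)/(\beta_n-\alpha_n)$ fails as $\beta_n - \alpha_n \to 0$, which is exactly why the degenerate case must be split off and treated by hand using $\gamma_{x,x}(t) = x$. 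Once those two cases are separated the argument is a routine compactness-and-continuity diagonalization, using \autoref{lem:1.4}, \autoref{lem:lipproper}, and the hypotheses of \autoref{assumption} packaged into \autoref{def:flowspace}.
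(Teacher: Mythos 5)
Your proof is correct, and the overall strategy — extract a convergent time parameter from a compact interval, then pass to the limit using the continuity of the bicombing together with the control function from Assumption~\ref{assumption} — is the same as the paper's. The mechanics differ in two ways, and the paper's version is noticeably shorter. First, instead of tracking the active subinterval $[\alpha_n,\beta_n]$ and its endpoints $f_n(\alpha_n),f_n(\beta_n)$, the paper observes that any $c_n\in A(X,\gamma,[a,b])$ can be written as $\Phi_{-t_n}c_{c_n(a),c_n(b)}$ with a \emph{single} parameter $t_n\in[a,b]$: since $c_n$ is locally constant outside $[a,b]$, evaluating at the fixed endpoints $a$ and $b$ already recovers the start and end points of the path, so no separate $\alpha_n,\beta_n$ need be extracted and no equicontinuity argument is needed (evaluation at a fixed point is continuous directly). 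Second, rather than verifying convergence of the reparametrized formula $\gamma_{x_n,y_n}((s-\alpha_n)/(\beta_n-\alpha_n))$ by hand and splitting off the degenerate limit $\alpha=\beta$, the paper invokes the continuity of the single map $X\times X\times\bbR\to\Lip_1(X)$, $(x,y,t)\mapsto\Phi_t c_{x,y}$, which was already established after Definition~\ref{def:flowspace} as a consequence of the length-control function $f$ and the hypothesis $\gamma_{x,x}(t)=x$; the degenerate case and the reparametrization blow-up that you flag as the ``main obstacle'' are absorbed into this one continuity statement. Your approach is sound but re-derives by hand what the continuity of $(x,y,t)\mapsto\Phi_t c_{x,y}$ already packages.
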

\begin{proof}
Let $c_n\in A(X,\gamma,[a,b])$ converge to $c\in\Lip_1(X)$. Then $c_n(a)$ converges to $c(a)$ and $c_n(b)$ to $c(b)$. There exist $t_n\in[a,b]$ such that $c_n=\Phi_{-t_n}c_{c_n(a),c_n(b)}$. Consider a subsequence such that $t_n$ converges to $t\in[a,b]$. Since both the flow $\Phi$ and the bicombing $c$ are continuous, so is the map
\[X\times X\times \bbR\to \Lip_1(X),\quad (x,y,t)\mapsto \Phi_tc_{x,y}.\]
Hence $c_n$ converges to $\Phi_{-t}c_{c(a),c(b)}$, i.e.
$c=\Phi_{-t}c_{c(a),c(b)}\in A(X,\gamma)$.
Since the space $\Lip_1(X,[a,b])$ is a closed subspace of $\Lip_1(X)$, we have \[c\in A(X,\gamma)\cap  \Lip_1(X,[a,b])=A(X,\gamma,[a,b]).\qedhere\]
\end{proof}

\begin{lemma}\label{lem:res} Let $\gamma$ be a consistent bicombing on $X$. Then the space $FS(X,\gamma)$ consists of all trails $w$ such that for any interval $[a,b]$ of finite length
\[\res_{[a,b]}w\in A(X,\gamma,[a,b]),\]
i.e. $\res_{[a,b]}w=\Phi_{-\max\{a,w_-\}}c_{w(a),w(b)}$.
\end{lemma}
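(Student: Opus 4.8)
The plan is to establish the two inclusions separately; the inclusion of $FS(X,\gamma)$ in the set of trails with the stated restriction property is the substantive one, while the reverse inclusion is an easy approximation argument.

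First I would prove that $FS(X,\gamma)$ is contained in that set. Let $w\in FS(X,\gamma)$, say $w=\lim_{n\to\infty}w_n$ with $w_n\in A(X,\gamma)$, and fix a finite interval $[a,b]$. By the remark preceding \autoref{lem:resclosed} the retraction $\res_{[a,b]}$ is continuous and, since $\gamma$ is consistent, carries $A(X,\gamma)$ into $A(X,\gamma,[a,b])$; moreover $A(X,\gamma,[a,b])$ is closed in $\Lip_1(X)$ by \autoref{lem:resclosed}. Hence $\res_{[a,b]}w=\lim_{n\to\infty}\res_{[a,b]}w_n$ lies in $A(X,\gamma,[a,b])$, which is the required restriction property. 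It then remains to see that $w$ itself is a trail. For each finite interval $I$ the map $w|_I$ coincides with $(\res_I w)|_I$, hence is the restriction of a trail and is therefore constant on $I$ outside a closed subinterval $J_I\subseteq I$ (unique when $w|_I$ is non-constant) on which it has speed $1$. Using the compatibility $\res_I\circ\res_{I'}=\res_I$ for $I\subseteq I'$ one gets $J_I=J_{I'}\cap I$; in particular the subintervals $J_{[-n,n]}$ are nested increasingly, their union is an interval $(w_-,w_+)$ (allowing $w_-=-\infty$ or $w_+=+\infty$, and the empty interval when $w$ is constant), the map $w$ has speed $1$ on $(w_-,w_+)$ and hence, by continuity, on $[w_-,w_+]$, and $w$ is locally constant on the complement. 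So $w$ is a trail, and a comparison of endpoint values and of speed-$1$ intervals then identifies $\res_{[a,b]}w\in A(X,\gamma,[a,b])$ with $\Phi_{-\max\{a,w_-\}}c_{w(a),w(b)}$.

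For the reverse inclusion, let $w$ be a trail with $\res_{[a,b]}w\in A(X,\gamma,[a,b])$ for every finite $[a,b]$, and put $w_n\coloneqq\res_{[-n,n]}w$. Each $w_n$ lies in $A(X,\gamma,[-n,n])\subseteq A(X,\gamma)$, agrees with $w$ on $[-n,n]$, and is constant on each of the two complementary rays. Since $w$ is $1$-Lipschitz, $d_X(w_n(t),w(t))\le|t|-n\le|t|$ whenever $|t|>n$, so
\[d_{\Lip_1(X)}(w_n,w)=\int_{|t|>n}\frac{d_X(w_n(t),w(t))}{2e^{|t|}}\,dt\le\int_{|t|>n}\frac{|t|}{2e^{|t|}}\,dt,\]
which tends to $0$ as $n\to\infty$ since $t\mapsto|t|/(2e^{|t|})$ is integrable over $\bbR$. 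Therefore $w=\lim_{n\to\infty}w_n\in FS(X,\gamma)$.

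I expect the main obstacle to be the proof that $w$ is a trail in the forward inclusion: this is exactly the difficulty flagged in the remark preceding \autoref{def:flowspace}, namely that a limit of speed-one paths could a priori have smaller speed, and here it is ruled out only because consistency of $\gamma$ makes the sets $A(X,\gamma,[a,b])$ closed (\autoref{lem:resclosed}) and the retractions $\res_{[a,b]}$ compatible. The concrete work is patching the local speed-one intervals $J_{[-n,n]}$ into a single global interval $[w_-,w_+]$ and the bookkeeping that produces the explicit formula for $\res_{[a,b]}w$.
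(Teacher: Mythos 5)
Your proposal is correct and follows essentially the same route as the paper: the reverse inclusion via approximation by $\res_{[-n,n]}w$, and the forward inclusion via continuity of $\res_{[a,b]}$ together with the closedness of $A(X,\gamma,[a,b])$ from \autoref{lem:resclosed}, followed by patching the speed-one intervals to show $w$ is a trail. Your version of the trail-patching step is slightly more explicit than the paper's, which simply takes the maximal speed-one interval and argues constancy outside it, but the underlying idea is identical.
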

\begin{proof}
Let $w\in \Lip_1(X)$ be a trail such that for all intervals of finite length $[a,b]$ we have $\res_{[a,b]}w\in A(X,\gamma)$. Then the sequence $(\res_{[-n,n]}w)_{n\in\bbN}$ in $A(X,\gamma)$ converges to $w$ and thus $w\in FS(X,\gamma)$.

For any interval $[a,b]$ of finite length $A(X,\gamma)$ is mapped to $A(X,\gamma,[a,b])$ under $\res_{[a,b]}$ since $\gamma$ is consistent. By \autoref{lem:resclosed} $A(X,\gamma,[a,b])\subseteq \Lip_1(X)$ is closed and thus also $FS(X,\gamma)$ is mapped to $A(X,\gamma,[a,b])$ under $\res_{[a,b]}$. 

It remains to show that any $w\in FS(X,\gamma)$ is a trail. Let $w_-,w_+$ be maximal such that $w$ is parametrised by path length on $[-w_-,w_+]$. If $w_-,w_+=\infty$, the element $w$ is a trail. Thus, let us assume $w_+<\infty$. For $n>w_+$ the trail $\res_{[-n,n]}w$ is only parametrised by path length on $[-\max\{n,w_-\},w_+]$ and thus has to be constant on $[w_+,n]$. This shows that $w$ is constant on $[w_+,\infty)$ and by the same argument on $(-\infty,-w_-]$, i.e. $w$ is a trail.
\end{proof}

\begin{lemma}
\label{lem:properclosed}
Proper maps between locally compact Hausdorff spaces are closed.
\end{lemma}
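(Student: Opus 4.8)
The plan is to use the standard point-set argument: a proper map $f\colon X\to Y$ (preimages of compact sets are compact) sends closed sets to closed sets when $Y$ is locally compact Hausdorff. Let $C\subseteq X$ be closed and let $y\in\overline{f(C)}$; I must show $y\in f(C)$.

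First I would use local compactness of $Y$ to choose a compact neighbourhood $K$ of $y$, so that $y\in\operatorname{int}(K)$. Since $f$ is proper, $f^{-1}(K)$ is compact, and hence $C\cap f^{-1}(K)$, being a closed subset of a compact set, is compact. Its image $f(C\cap f^{-1}(K))$ is therefore compact, and since $Y$ is Hausdorff this image is closed in $Y$.

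Next I would check that $y$ lies in this closed image. Given any open neighbourhood $U$ of $y$, after replacing $U$ by $U\cap\operatorname{int}(K)$ I may assume $U\subseteq K$; since $y\in\overline{f(C)}$ there is $c\in C$ with $f(c)\in U$, and then $c\in f^{-1}(K)$, so $f(c)\in f(C\cap f^{-1}(K))\cap U$. Thus $y\in\overline{f(C\cap f^{-1}(K))}=f(C\cap f^{-1}(K))\subseteq f(C)$, which gives the claim.

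There is no real obstacle here; the only point to be slightly careful about is the bookkeeping with the neighbourhood $K$ — one needs the interior of the compact neighbourhood so that arbitrary neighbourhoods of $y$ can be shrunk inside $K$ — together with noting that the definition of properness in use is precisely "preimages of compacta are compacta", which is consistent with how the lemma is applied later (for instance to $ev_0$, already shown proper in \autoref{cor:evproper}).
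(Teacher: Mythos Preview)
Your proof is correct and follows essentially the same argument as the paper: both use local compactness of $Y$ to find a compact neighbourhood of the point in question, pull it back via properness, intersect with $C$, and use that compact images are closed in Hausdorff spaces. The only cosmetic difference is that the paper phrases it contrapositively (for $y\notin f(C)$ it exhibits an open neighbourhood disjoint from $f(C)$), whereas you show directly that $y\in\overline{f(C)}$ forces $y\in f(C)$.
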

\begin{proof}
Let $f\colon X\to Y$ be proper and $Y$ locally compact and Hausdorff. Let $C\subseteq X$ be closed. Every point $y\in Y\setminus f(C)$ has an open neighborhood $U$ with compact closure. Since $f$ is proper also $f^{-1}(\overline{U})$ and $K\coloneqq f^{-1}(\overline{U})\cap C$ are compact. Thus $f(K)$ is compact and hence closed. Now $U\setminus f(K)$ is an open neighborhood of $y$ disjoint from $f(C)$. 
\end{proof}

\begin{lemma} Let $\gamma$ be a consistent bicombing on $X$. The map $\lim_{n\in\bbN}\res_{[-n,n]}:FS(X,\gamma) \rightarrow  \lim_{n\in \bbN} A(X,\gamma,[-n,n])$ is a homeomorphism.
\end{lemma}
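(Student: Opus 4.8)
The plan is to write $\Psi\coloneqq\lim_{n\in\bbN}\res_{[-n,n]}$ for the map in question and to show it is a continuous bijection with continuous inverse. Continuity of $\Psi$ is formal: by the remark preceding \autoref{lem:resclosed} and the proof of \autoref{lem:res}, each $\res_{[-n,n]}\colon FS(X,\gamma)\to A(X,\gamma,[-n,n])$ is a well-defined continuous map, and for $m\le n$ one checks directly on the three pieces of the domain that $\res_{[-m,m]}=\res_{[-m,m]}\circ\res_{[-n,n]}$, so the family $(\res_{[-n,n]})_n$ is compatible with the tower and induces a continuous map to the inverse limit. Injectivity is equally immediate: if $\res_{[-n,n]}w=\res_{[-n,n]}w'$ for all $n$, then for any $t$ pick $n\ge |t|$ and read off $w(t)=(\res_{[-n,n]}w)(t)=(\res_{[-n,n]}w')(t)=w'(t)$.

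For surjectivity, take a compatible family $(c_n)_n$, so that $c_m=\res_{[-m,m]}c_n$ for $m\le n$; in particular $c_m$ and $c_n$ agree on $[-m,m]$, so there is a well-defined $w\colon\bbR\to X$ with $w|_{[-n,n]}=c_n|_{[-n,n]}$. Each $c_n$ is $1$-Lipschitz and they glue continuously, so $w\in\Lip_1(X)$. Since $c_n\in A(X,\gamma,[-n,n])$ is locally constant, hence constant, on each component of $\bbR\setminus[-n,n]$, we get $c_n=\res_{[-n,n]}w$; and $1$-Lipschitzness of $w$ gives $d(w(t),c_n(t))\le |t|-n$ for $|t|>n$ while the integrand vanishes for $|t|\le n$, so
\[d_{\Lip_1(X)}(c_n,w)\le\int_{|t|>n}\frac{|t|-n}{2e^{|t|}}\,dt=e^{-n}.\]
As $c_n\in A(X,\gamma)$ and $e^{-n}\to0$, we conclude $w\in\overline{A(X,\gamma)}=FS(X,\gamma)$ and $\Psi(w)=(c_n)_n$.

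It remains to show $\Psi^{-1}$ is continuous, which is the step I expect to need the most care. The inverse limit sits inside the countable product $\prod_n A(X,\gamma,[-n,n])$ of metric spaces and is therefore metrizable, so it suffices to prove sequential continuity. Suppose $(c^{(k)}_n)_n\to(c_n)_n$, i.e. $d_{\Lip_1(X)}(c^{(k)}_N,c_N)\to0$ as $k\to\infty$ for each fixed $N$, and let $w^{(k)},w\in FS(X,\gamma)$ be the preimages. Fixing $N$, the functions $w^{(k)}$ and $c^{(k)}_N$ agree on $[-N,N]$, as do $w$ and $c_N$, so $\int_{|t|\le N}\frac{d(w^{(k)}(t),w(t))}{2e^{|t|}}\,dt\le d_{\Lip_1(X)}(c^{(k)}_N,c_N)$. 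For the tail, $1$-Lipschitzness together with \autoref{lem:1.4}(1) gives
\[d(w^{(k)}(t),w(t))\le 2|t|+d(w^{(k)}(0),w(0))\le 2|t|+d_{\Lip_1(X)}(c^{(k)}_1,c_1)+2\le 2|t|+M\]
for a constant $M$ independent of $k$, since the convergent sequence $(c^{(k)}_1)_k$ is bounded. Hence $d_{\Lip_1(X)}(w^{(k)},w)\le d_{\Lip_1(X)}(c^{(k)}_N,c_N)+\int_{|t|>N}\frac{2|t|+M}{2e^{|t|}}\,dt$. Given $\epsilon>0$ we first choose $N$ making the convergent tail integral smaller than $\epsilon/2$, then $k$ large enough that the first summand is below $\epsilon/2$, so $w^{(k)}\to w$. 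Thus $\Psi^{-1}$ is continuous and $\Psi$ is a homeomorphism. (Alternatively one can argue that $\Psi$ is proper: if $C$ is compact in the inverse limit then $ev_0$ maps $\Psi^{-1}(C)$ into the compact set $ev_0(\pi_1(C))\subseteq X$, so $\Psi^{-1}(C)$ is a closed subset of the compact set $ev_0^{-1}(ev_0(\pi_1(C)))$ by \autoref{cor:evproper}; a proper bijection onto a metrizable space is then a homeomorphism. The direct estimate above avoids this detour.)
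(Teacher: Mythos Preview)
Your proof is correct. The paper takes the shorter route you sketch in your parenthetical: it cites \autoref{lem:res} for the statement that $\Psi$ is a continuous bijection (without spelling out surjectivity as carefully as you do), and then proves $\Psi$ is proper by noting that for any compact $K$ in the inverse limit one may enlarge $K$ to the preimage of a compact $K'\subset A(X,\gamma,[0,0])=X$, so that $\Psi^{-1}(K)\subseteq ev_0^{-1}(K')$, which is compact by \autoref{cor:evproper}; then \autoref{lem:properclosed} finishes.

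Your primary argument is different: you give a direct $\epsilon$-$N$ estimate for sequential continuity of $\Psi^{-1}$, splitting the integral defining $d_{\Lip_1(X)}$ into a central piece controlled by convergence of the $N$-th coordinate and a tail controlled uniformly via \autoref{lem:1.4}. This is perfectly valid and entirely self-contained; it avoids any appeal to local compactness of the target and to \autoref{lem:properclosed}. The properness route is slicker once one has \autoref{cor:evproper} available, but your estimate is more elementary and would still work if, say, $X$ were not proper. Since you also include the properness argument, your write-up in fact subsumes the paper's proof.
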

\begin{proof}
\autoref{lem:res} already shows that it is a continuous bijection. We have to show that it is closed or by \autoref{lem:properclosed} that it is proper. Let $K$ be any compact subset of $\lim_{n\in \bbN} A(X,\gamma,[-n,n])$. We have to show that its preimage is compact. By enlarging $K$, we may assume without loss of generality that it is the preimage of some compact subset $K'\subset A(X,\gamma,[0,0])$. Note that
\[FS(X,\gamma)\rightarrow A(X,\gamma,[0,0]) = X\] 
is the evaluation at $0$ and thus a proper map by \autoref{cor:evproper}. 
\end{proof}

\begin{lemma}\label{lem:A-findim}
Let $\gamma$ be a bicombing on $X$ as in \autoref{assumption}. The spaces $A(X,\gamma)$ and $A(X,\gamma,[a,b])$ have covering dimension at most $2\dim X+1$.
\end{lemma}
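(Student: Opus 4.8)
The plan is to exhibit $A(X,\gamma)$ as a continuous image of a space whose dimension is easy to bound, and then use a dimension-raising bound for proper/closed surjections. Concretely, I would start from the parametrization map
\[
\Psi\colon X\times X\times\bbR\longrightarrow A(X,\gamma),\qquad (x,y,t)\longmapsto \Phi_t c_{x,y}.
\]
By the discussion after Definition~\ref{def:flowspace} (using the function $f$ from \autoref{assumption}), $\Psi$ is continuous and surjective onto $A(X,\gamma)$. The source $X\times X\times\bbR$ has covering dimension at most $2\dim X+1$ since $X$ is a finite-dimensional metric space, hence the product formula $\dim(X\times X\times\bbR)\le \dim X+\dim X+\dim\bbR$ applies (for metrizable, in fact separable metric, spaces this is standard). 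So it suffices to see that $\Psi$ does not raise dimension.

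The key point is that $\Psi$ has "small" point-preimages: if $c_{x,y}$ is nonconstant then $x,y$ are recovered as the endpoints of the path part and $t$ is recovered from where the path part sits, so $\Psi$ is injective over the nonconstant trails; over the constant trails the fiber is $\{x\}\times\{x\}\times\bbR$, which is $1$-dimensional, but these all map into a copy of $X$ sitting inside $A(X,\gamma)$ (the constant trails), which already has dimension $\le\dim X$. The clean way to package this is the classical theorem that if $g\colon P\to Q$ is a closed continuous surjection between (separable metrizable) spaces with $\dim g^{-1}(q)\le k$ for all $q$, then $\dim Q\le \dim P+k$ — but here I would rather avoid worrying about $k$ and instead use the version: a continuous surjection which is at most "$(k{+}1)$-to-one" in a suitable sense raises dimension by at most $k$, combined with the observation above. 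Alternatively, decompose $A(X,\gamma)$ into the open set $U$ of nonconstant trails, over which $\Psi$ restricts to a continuous \emph{bijection} from an open subset of $X\times X\times\bbR$ (and is in fact a homeomorphism there, using properness via \autoref{cor:evproper}-type arguments, or simply that $\dim U\le \dim(X\times X\times \bbR)\le 2\dim X+1$ holds for the \emph{image} of a one-to-one closed map), and the closed complement $A(X,\gamma)\setminus U\cong X$ of dimension $\le\dim X\le 2\dim X+1$; then apply the countable-sum / closed-subset-and-open-complement theorem $\dim A(X,\gamma)\le\max\{\dim U,\dim(A(X,\gamma)\setminus U)\}$ for metric spaces.

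For $A(X,\gamma,[a,b])$ the same scheme works with source $X\times X$ (the time parameter is pinned down by the condition of being locally constant outside $[a,b]$, up to the finitely many degenerate cases where the path part degenerates, which only lowers dimension), giving the even better bound $2\dim X$; in particular it is $\le 2\dim X+1$. Since $A(X,\gamma,[a,b])$ is closed in $\Lip_1(X)$ by \autoref{lem:resclosed}, no completion issue arises.

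The main obstacle I anticipate is making the "$\Psi$ does not raise dimension" step fully rigorous: one has to be careful that $A(X,\gamma)$ need not be locally compact a priori (so \autoref{lem:properclosed} is not directly available for $\Psi$ itself), and that the inverse of $\Psi$ over the nonconstant trails, while set-theoretically defined, is continuous — this uses that the endpoints and the value of $t$ depend continuously on the trail, which in turn relies on $f$ from \autoref{assumption} (controlling $l(\gamma_{x,y})$) exactly as invoked after Definition~\ref{def:flowspace}. Once continuity of the local inverse is in hand, $U$ embeds in $X\times X\times\bbR$ and the dimension bound on $U$ is immediate; the decomposition argument then finishes the proof. I would expect the write-up to quote the relevant facts from Engelking's dimension theory (product theorem for metric spaces, the closed-set-plus-open-complement estimate) rather than reprove them.
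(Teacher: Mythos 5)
Your second route---write $A(X,\gamma)$ as the union of the closed set $A(X,\gamma)^\bbR$ of constant trails ($\cong X$) and its open complement, observe that $(x,y,t)\mapsto\Phi_t c_{x,y}$ is a continuous proper bijection from $(X\times X\setminus\Delta(X))\times\bbR$ onto that complement (hence a homeomorphism by \autoref{lem:properclosed}), and conclude by the subspace theorem together with the closed-plus-open decomposition estimate---is exactly the paper's argument; for $A(X,\gamma,[a,b])$ the paper simply applies the subspace theorem once more, since it is a subset of $A(X,\gamma)$, which is lighter than your separate $X\times X$ parametrization. The one step your sketch leaves unresolved, properness of that bijection, is established in the paper not via \autoref{cor:evproper} but by a direct estimate in the $\Lip_1$-metric: fixing $c=\Phi_t c_{x,y}$, one uses the positive distance from $c$ to the closed set of constant maps together with the modulus $f$ from \autoref{assumption} to produce an $R$ such that every $c'$ in a small ball around $c$ satisfies $c'(-2R)\in B_3(x)$, $c'(2R)\in B_3(y)$ and $c'=\Phi_{t'}c_{c'(-2R),c'(2R)}$ for some $t'\in[-2R,2R]$, so the preimage of that ball has compact closure.
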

\begin{proof}
The space $A(X,\gamma)$ is the union of the constant trails $A(X,\gamma)^\bbR$ and the non-constant ones. The evaluation map is an isometry of $A(X,\gamma)^\bbR$  to $X$ and $\dim A(X,\gamma)^\bbR=\dim X$. Let us examine the second part. Consider the continuous map
\[((X\times X)\setminus \Delta(X))\times \bbR \rightarrow A(X,\gamma)\setminus A(X,\gamma)^\bbR,\quad (x,y,t)\mapsto \Phi_tc_{x,y},\]
where $\Delta\colon X\to X\times X$ denotes the diagonal map. The map is bijective by definition of $A(X,\gamma)$. 
We will now show that the map is also proper. For this it suffices that every point in the image has a small neighborhood such that the closure of the preimage is compact. Given $c\coloneqq\Phi_tc_{x,y}\in A(X,\gamma)\setminus A(X,\gamma)^\bbR$. Since $\Lip_1(X)^\bbR\subseteq\Lip_1(X)$ is closed, we have $\epsilon\coloneqq d(c,\Lip_1(X)^\bbR)>0$. Let $r>0$ be such that $\int_{-\infty}^{-r}\tfrac{|t|}{2e^{|t|}}dt<\epsilon/2$. Define $R\coloneqq\max\{1,-\frac{\log(\epsilon/2)}{2}, r, -c_-, c_+, f(d(x,y)+6)\}$. For $c'\in B_{e^{-2R}}(c)$ and $t\in[-2R,-R]$ we have by \autoref{lem:1.4}
\[d(x,c'(t))=d(c(t),c'(t))\leq e^{|t|-2R}+2\leq 3\]
and for $t\in[R,2R]$
\[d(y,c'(t))=d(c(t),c'(t))\leq e^{|t|-2R}+2\leq 3.\]
Thus $d(c'(-2R),c'(2R))\leq d(x,y)+6$ and $l(\res_{[-2R,2R]}c')\leq f(d(x,y)+6)\leq R$.

Claim: The trail $c'$ is constant on $(-\infty,-2R]$.

Since $c'$ is parametrised by path length it otherwise has to be constant on $[-R,2R]$ by the above and since it is a trail also on $[-R,\infty)$. Then the distance to the constant map to $c'(-R)$ is
\[\int_{-\infty}^\infty\tfrac{d(c'(t),c'(-R))}{2e^{|t|}}dt=\int_{-\infty}^{-R}\tfrac{d(c'(t),c'(-R))}{2e^{|t|}}dt\leq \int_{-\infty}^{-R}\tfrac{|t|}{2e^{|t|}}dt<\epsilon/2.\]
Thus $d(c,\Lip_1(X)^\bbR)\leq d(c,c')+d(c',\Lip_1(X)^\bbR)<e^{-2R}+\epsilon/2\leq \epsilon$ which is a contradiction to the definition of $\epsilon$. 

By the same argument it follows that $c'$ is constant on $[2R,\infty)$ and there exists $t\in[-2R,2R]$ with $c'=\Phi_tc_{c(-2R),c(2R)}$. In particular, the pre-image of $B_{1/2R}(c)$ is contained in $K\coloneqq B_3(x)\times B_3(y)\times [-2R,2R]$ and the closure of $K$ is compact.

It follows that $A(X,\gamma)\setminus A(X,\gamma)^\bbR$ is homeomorphic to a subset of $X\times X\times \bbR$ and by the subspace theorem \cite[Theorem~1.1.2]{engelking1978dimension} it has dimension at most $2\dim X+1$. Since $A(X,\gamma)=A(X,\gamma)^\bbR\cup(A(X,\gamma)\setminus A(X,\gamma)^\bbR)$ and $A(X,\gamma)^\bbR\subseteq A(X,\gamma)$ is closed, this implies $\dim A(X,\gamma)\leq 2\dim X+1$ by \cite[Corollary~1.5.5]{engelking1978dimension}. Using the subspace theorem again, this also holds for $A(X,\gamma,[a,b])$.
\end{proof}
\begin{corollary}
\label{cor:findim}
The space $FS(X,\gamma)$ has covering dimension at most $2\dim X+1$.
\end{corollary}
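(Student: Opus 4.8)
The plan is to deduce the corollary from \autoref{lem:A-findim} together with the homeomorphism $FS(X,\gamma)\cong\varprojlim_{n\in\bbN}A(X,\gamma,[-n,n])$ proved above and the classical fact that the covering dimension of the limit of an inverse sequence of compact metric spaces is bounded by the supremum of the dimensions of the terms (\cite[Theorem~1.13.2]{engelking1978dimension}). The one subtlety is that the spaces $A(X,\gamma,[-n,n])$ are in general not compact, so that fact cannot be applied to this inverse system directly; the remedy is to first cut $FS(X,\gamma)$ into compact pieces, which is possible because it is a proper metric space.

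In detail, I would argue as follows. Since $FS(X,\gamma)$ is proper, it is $\sigma$-compact; fixing a point $w_0$ we write $FS(X,\gamma)=\bigcup_{k\in\bbN}K_k$ with $K_k\coloneqq\overline{B_k(w_0)}$ compact, and each $K_k$ is a closed subspace. By the countable closed sum theorem \cite[Corollary~1.5.5]{engelking1978dimension} it suffices to prove $\dim K_k\leq 2\dim X+1$ for every $k$. Fix $k$. For each $n$ the set $\res_{[-n,n]}(K_k)$ is a compact subspace of $A(X,\gamma,[-n,n])$ (by \autoref{lem:res} and continuity of $\res_{[-n,n]}$), hence has covering dimension at most $2\dim X+1$ by \autoref{lem:A-findim} and the subspace theorem \cite[Theorem~1.1.2]{engelking1978dimension}; for $m\le n$ the restriction map $\res_{[-m,m]}$ sends $\res_{[-n,n]}(K_k)$ into $\res_{[-m,m]}(K_k)$, so these spaces form an inverse sequence of compact metric spaces. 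Under the homeomorphism above the image of $K_k$ is contained in $\varprojlim_{n\in\bbN}\res_{[-n,n]}(K_k)$, which is a closed and hence compact subspace of $\prod_{n\in\bbN}\res_{[-n,n]}(K_k)$. By \cite[Theorem~1.13.2]{engelking1978dimension} this inverse limit has covering dimension at most $2\dim X+1$, and therefore so does $K_k$, again by the subspace theorem. This proves the corollary.

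The essential difficulty is precisely the non-compactness of the terms $A(X,\gamma,[-n,n])$ of the inverse system, which blocks a one-line application of the inverse-limit dimension estimate; it is resolved by the $\sigma$-compact decomposition above, available thanks to the properness of $FS(X,\gamma)$. Alternatively one could avoid the inverse limit theorem by writing $FS(X,\gamma)$ as the union of the countably many closed subspaces $A(X,\gamma,[-n,n])$ --- the trails $w$ with $w_-$ and $w_+$ both finite --- together with the subspace of generalized geodesic rays and lines, bounding the dimension of the latter by an embedding argument modelled on the proof of \autoref{lem:A-findim}; the route through compact inverse limits is however the shortest.
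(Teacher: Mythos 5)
Your proof is correct and follows essentially the same route as the paper's: use properness of $FS(X,\gamma)$ to write it as a countable union of compact closed pieces, reduce to each piece by the countable closed sum theorem, and then bound the dimension of a compact piece via the inverse limit $\varprojlim_n A(X,\gamma,[-n,n])$ together with \autoref{lem:A-findim}. The only difference is cosmetic: where you invoke the standard theorem that the covering dimension of an inverse limit of compact metric spaces is bounded by the supremum of the dimensions of the terms, the paper unpacks that fact by hand, refining an arbitrary open cover of the compact piece to one pulled back from a single stage $f_N$ and then applying the subspace theorem there.
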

\begin{proof}
Since $FS(X,\gamma)$ is a proper metric space, it can be written as a countable union of closed balls $B_1(x_i)$ for some sequence of points $x_i\in FS(X,\gamma)$. By the sum theorem
\cite[Theorem~1.5.3]{engelking1978dimension} it suffices to show that every such ball has dimension at most $2\dim X+1$. Let $f_n:FS(X,\gamma)\rightarrow A(X,\gamma,[-n,n])$ denote the restriction map. We get
\[B_1(x_i)=\lim_n f_n(B_1(x_i)).\]
Let $\calu$ be any open cover of $B_1(x_i)$. We have to find an $2\dim X+1$-dimensional refinement of $\calu$.  A basis for the topology of an inverse limit is given by the family $\{f_n^{-1}(U)\mid n\in \bbN, U\subset f_n(B_1(x_i)) \mbox{ open}\}$. Thus we can assume without loss of generality that every open set in $\calu$ is of this form. Since $B_1(x_i)$ is compact, we can pick a finite subcover. 
Thus there is some $N$ such that every set of this subcover is a pullback of an open set in $f_N(B_1(x_i))$. Thus the whole 
cover is a pullback of some cover $\calu'$ of $f_N(B_1(x_i))$.
Since $f_N(B_1(x_i))$ is a subspace of $A(X,\gamma,[-N,N])$, we can use the subspace theorem \cite[Theorem~1.2.2]{engelking1978dimension}
and \autoref{lem:A-findim} to find a $2\dim X+1$-dimensional subcover $\calu''$ of $\calu'$. Then $f_N^{-1}(\calu'')$ is the desired refinement. 
\end{proof}
\section{Contracting transfers}
\label{sec:trans}
In this section $(X,\gamma), A$ and $f$ will always be as in \autoref{assumption} and we assume the monotonicity assumptions from \autoref{rem:monoton}. Furthermore, $FS(X,\gamma)$ will be the flow space as defined in \autoref{def:flowspace}, where the trails induced by $\gamma$ are denoted by $c_{x,y}$.

\begin{definition}[{\cite[Definition~2.1,Definition~2.3]{wegnercat}}] A \emph{strong homotopy action} of a group $G$ on a topological space $X$
is a continuous map
\[\Psi: \coprod_{j=0}^\infty (G\times [0,1])^j\times G\times X \rightarrow X\]
with the following properties:
\begin{enumerate}
\item $\Psi(\ldots,g_l,0,g_{l-1},\ldots) = \Psi(\ldots,g_l,\Psi(g_{l-1},\ldots))$,
\item $\Psi(\ldots,g_l,1,g_{l-1},\ldots) = \Psi(\ldots,g_l \cdot g_{l-1},\ldots)$,
\item $\Psi(e,t_j,g_{j-1},\ldots)=\Psi(g_{j-1},\ldots)$,
\item $\Psi(\ldots,t_l,e,t_{l-1},\ldots) = \Psi(\ldots,t_l \cdot t_{l-1},\ldots)$,
\item $\Psi(\ldots,t_1,e,x) = \Psi(\ldots,x)$,
\item $\Psi(e,x) = x$.
\end{enumerate}
For a subset $S\subseteq G$ containing $e$,$g\in G$ and a $k\in \bbN$ define
\[F_g(\Psi,S,k)\coloneqq  \{\Psi(g_k,t_k,\ldots,g_0,?):X\rightarrow X\mid g_i\in S,t_i\in [0,1],g_k \ldots g_0=g\}.\]
For $(g,x)\in G\times X$ we define $S^0_{\Psi,S,k}(g,x)$ as $\{(g,x)\}$, $S^1_{\Psi,S,k}(g,x)\subseteq G\times X$ as the subset consisting of all $(h,y)\in G\times X$ with the following property: There are $a,b\in S$, $f\in F_a(\Psi,S,k),f'\in F_b(\Psi,S,k)$ such that $f(x)=f'(y)$ and $h=ga^{-1}b$.

For $n\ge 2$ define inductively $S^n_{\Psi,S,k}(g,x)=\bigcup_{(h,y)\in S^{n-1}_{\Psi,S,k}(g,x)} S^1_{\Psi,S,k}(h,y)$.
\end{definition}
In case of a strict group action, the sets $F_g(\Psi,S,k)$ contain only the map $X\rightarrow X$ that is given by multiplication with $g$ and the sets $S^n(\Psi,S,k)(g,x)$ are analogs of the sets $B_{2nk}(e)\cdot (g,x)$, where $B_{2nk}(e)$ denotes the ball around the neutral element $e\in G$ with respect to the word metric with generating set $S$ and the $G$-action on $G\times X$ given by $s\cdot (g,x) = (gs^{-1},sx)$.

\begin{defi}[{\cite[Definition 0.2]{flow}}]
Let $X$ be a metric space and $N\in\bbN$. We say that $X$ is controlled $N$-dominated if for every $\epsilon>0$ there is a finite CW-complex $K$ of dimension at most $N$, maps $i\colon X\to K, p\colon K\to X$ and a homotopy $H\colon X\times [0,1]\to X$ between $p\circ i$ and $\id_X$ such that for every $x\in X$ the diameter of $\{H(x,t)\mid t\in [0,1]\}$ is at most $\epsilon$.
\end{defi}
In our situation we will actually have $p\circ i=\id_X$.

\begin{definition}[{\cite[Definition~9.6]{covers}}] \label{def:sct}A flow space FS for a group $G$ admits \emph{strong contracting transfers} if there is an $N\in\bbN$ such that for every finite subset $S$ of $G$ and every $k\in \bbN$ there exists $\beta>0$ such that the following holds. For every $\delta>0$ there is 
\begin{enumerate}
\item a number $T>0$;
\item a contractible, compact, controlled $N$-dominated space $X$;
\item a strong homotopy action $\Psi$ on $X$;
\item a $G$-equivariant map $\iota: G\times X\rightarrow FS$ (where the $G$-action on $G\times X$ is given by $g\cdot (g',x)=(gg',x)$) such that the following holds:
\item[($\ast$)] \label{def:sct5} for every $(g,x)\in G\times X, s\in S,f\in F_s(\Psi,S,k)$ there is a $\tau \in [-\beta,\beta]$ such that
$d_{FS}(\Phi_T\iota(g,x),\Phi_{T+\tau}\iota(gs^{-1},f(x)))\le \delta$.
\end{enumerate}
\end{definition}
The goal of this section is to prove the following:
\begin{prop}
\label{prop:transfer}
If $(X,\gamma)$ is as in \autoref{assumption}, then the flow space $FS(X,\gamma)$ admits strong contracting transfers.
\end{prop}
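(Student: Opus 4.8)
The plan is to reproduce the construction of contracting transfers for $\mathrm{CAT}(0)$-groups of Bartels and L\"uck \cite[Section~3]{flow}, in the strengthened form due to Wegner \cite{wegnercat} that produces a genuine strong homotopy action, replacing every appeal to the $\mathrm{CAT}(0)$-geometry by the corresponding consequence of \autoref{assumption}: the bicombing $\gamma$ takes the role of geodesics, $A$-convexity that of convexity of the metric, and the function $f$ supplies the continuity and coarseness estimates. Concretely, for a given finite $S\subseteq G$ with $e\in S$ and $k\in\bbN$ I have to produce one $\beta>0$, and then for each $\delta>0$ a number $T$, a compact contractible controlled $N$-dominated space $Z$, a strong homotopy action $\Psi$ on $Z$ and a $G$-equivariant map $\iota\colon G\times Z\to FS(X,\gamma)$ with $(\ast)$; here $N=\dim X$ works since $Z$ will be a subspace of $X$, and $FS(X,\gamma)$ already carries a proper cocompact $G$-action by \autoref{cor:findim}, \autoref{lem:properac} and \autoref{cor:cocompactac}.

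Fix $x_0\in X$ and, by cocompactness, $R_0$ with $G\cdot\overline{B_{R_0}(x_0)}=X$. For $Z$ I take the closure of the bicombing cone $\{\gamma_{x_0,y}(t)\mid y\in\overline{B_\rho(x_0)},\,t\in[0,1]\}$, with $\rho\ge R_0$ chosen large at the very end depending on $\delta$. Then $Z$ is compact because $X$ is proper, is contained in $\overline{B_{\rho'}(x_0)}$ with $\rho'=\sup_tA(t,0,\rho)<\infty$ by $A$-convexity, and is contractible via $H(z,t)=\gamma_{x_0,z}(t)$, which stays in $Z$ by consistency and satisfies $H(z,0)=x_0$, $H(z,1)=z$ because $\gamma_{x_0,x_0}(t)=x_0$. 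For the controlled $N$-domination I take, given $\epsilon>0$, a finite open cover of $Z$ of mesh $<\epsilon'$ and multiplicity $\le\dim X+1$, let $K$ be its nerve, $i\colon Z\to K$ a subordinate partition-of-unity map, and define $p\colon K\to Z$ by iterated bicombing barycentres of chosen points of the cover elements; $A$-convexity and $\gamma_{x,x}(t)=x$ bound the tracks of the homotopy $p\circ i\simeq\id_Z$ by a function of $\epsilon'$ tending to $0$, and one can arrange $p\circ i=\id_Z$. The strong homotopy action $\Psi$ comes from the almost action $g\cdot z:=r(gz)$, with $r\colon X\to Z$ the bicombing retraction along the paths $\gamma_{x_0,-}$, via the coherence procedure of \cite[Section~2]{wegnercat}; axioms (1)--(6) follow from consistency and $\gamma_{x,x}(t)=x$, and continuity of $\Psi$ uses continuity of $\gamma$ and the length bound $f$ as after \autoref{def:flowspace}.

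The map $\iota$ is built from the bicombing trails as in \cite[Section~3]{flow}: up to a flow-translate, $\iota(g,z)$ is the trail $g\cdot c_{x_0,z}\in FS(X,\gamma)$ running from $gx_0$ to $gz$, and it is $G$-equivariant because $\gamma$ is. To verify $(\ast)$ one fixes $(g,z)$, $s\in S$, $f\in F_s(\Psi,S,k)$ and, after applying the isometry $g^{-1}$, must bound $d_{FS}\bigl(\Phi_T c_{x_0,z},\Phi_{T+\tau}c_{s^{-1}x_0,s^{-1}f(z)}\bigr)$ for a suitable $\tau$. Since $z,f(z)\in Z\subseteq\overline{B_{\rho'}(x_0)}$ both trails have length at most $f(\rho')$, so by the definition of the $FS$-metric and \autoref{lem:1.4} their time-$T$ flows are, for $T$ large, within $\delta/3$ of the constant trails at their endpoints $z$ and $s^{-1}f(z)$, which are at $FS$-distance $d(sz,f(z))$ apart. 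Thus $(\ast)$ reduces to: (i) the almost action is close enough to the genuine $G$-action that $d(sz,f(z))<\delta/3$ for all $z\in Z$ --- achieved by the design of $\Psi$ together with the Remark after \autoref{def:alot} (two $\gamma$-paths with a common endpoint are uniformly close near it) and by taking $\rho$ large relative to $S,k$; and (ii) the flow offset $\tau$ that re-synchronises the two trails can be bounded by a single $\beta=\beta(S,k)$, controlled through $|l(\gamma_{x,y})-l(\gamma_{x',y'})|\le f(d(x,x')+d(y,y'))$ and $f(0)=0$.

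The step I expect to be the main obstacle is point (i): arranging, on the large $\delta$-dependent transfer space $Z$, a strong homotopy action $\Psi$ all of whose $F_s(\Psi,S,k)$-maps displace every point of $Z$ by less than $\delta$ from its image under the honest isometry $s$, while keeping $\Psi$ continuous, $Z$ compact and contractible, and $\beta$ independent of $\delta$. In the $\mathrm{CAT}(0)$-case this is exactly what the explicit geodesic estimates of \cite[Section~3]{flow} achieve; here the same bookkeeping has to be redone with the merely $A$-convex, not necessarily geodesic bicombing and the length-control $f$, the key geometric input being the uniform closeness of $\gamma$-paths near a common endpoint. A secondary technical point is the controlled $N$-domination of $Z$: \cite{flow} uses that balls in $\mathrm{CAT}(0)$-spaces are finite-dimensional ANRs, whereas here finite-dimensionality is part of \autoref{assumption} and the ANR-type data is supplied by the bicombing-barycentre construction above.
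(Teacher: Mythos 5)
Your overall setup — transfer space a bicombing-starred neighbourhood of $x_0$, homotopy $G$-action from the bicombing retraction via Wegner's coherence procedure, $\iota(g,x)=c_{gx_0,gx}$ — matches the paper. But the step where you verify \autoref{def:sct}~($\ast$) is wrong, and it is the heart of the matter. You propose to flow for $T$ so large that $\Phi_T\iota(e,z)$ and $\Phi_{T+\tau}\iota(s^{-1},f(z))$ are each $\delta/3$-close to the constant trails at their endpoints, and then to win by showing $d(sz,f(z))<\delta/3$. This last inequality is not achievable: $f(z)=\Psi(s,z)$ is the bicombing retraction of $sz$ into the transfer space, so when $sz$ lies outside the transfer space the retraction moves it by roughly $l(c_{x_0,sz})-R$, which is of order $f(d(x_0,sx_0))$ --- a quantity determined by $S$ alone, independent of $\delta$ and of $\rho$. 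Enlarging $\rho$ does not shrink this; it merely enlarges both the transfer space and the set of $z$ whose $S$-translates spill over its boundary. The Remark after \autoref{def:alot} is about two $\gamma$-paths sharing a \emph{terminal} point converging near that point, and says nothing about how far a point gets moved by the retraction. So your ``point (i)'' fails as stated.

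The paper's proof avoids this trap by never flowing past the endpoints. Using equivariance, $s^{-1}f(z)=c_{s^{-1}x_0,z}(R)$, so after translating by $g^{-1}$ one is comparing (the trail from $x_0$ to $z$) with (the restriction to $[0,R]$ of the trail from $s^{-1}x_0$ to $z$). Both are $\gamma$-paths to the \emph{same} target $z$ issuing from nearby sources $x_0$ and $s^{-1}x_0$, and \autoref{lemma:3.6} and \autoref{prop:3.5} choose $T$ slightly less than $R$ so that the two trails, when flowed by $T$, agree up to a small error controlled by $A(t,\beta',0)$ for $t$ close to $1$, together with the reparameterisation offset $\tau=l(c_{x_2,x})-l(c_{x_1,x})$ of size at most $f(\beta')$; this is where the hypothesis $A(1,s,0)=0$ is used. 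The error term tends to $0$ by taking $R$ (hence the ratio $t$) close enough to $1$, uniformly over $z\in P_{R+\beta}(x_0)$, and $\beta=(k+1)f(\beta')$ is fixed by $S,k$ alone as required. Your sketch replaces this flow-time calibration by comparison of constant trails, which loses the gain from $A$-convexity at ratios near $1$ and is why the argument cannot close.

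Two smaller points. First, you assert $N=\dim X$ suffices for the controlled domination via a nerve-and-barycentre argument; ``iterated bicombing barycentres'' is not well-defined for a merely $A$-convex bicombing, and the paper instead embeds $X$ as an ENR in $\bbR^{2\dim X+1}$ (\autoref{lem:6.1}) and retracts off a finite subcomplex (\autoref{lem:6.2}), giving $N=2\dim X+1$; either value of $N$ is admissible in \autoref{def:sct}, but the construction needs to be made precise. Second, the paper's transfer space is $P_R(x_0)=\{y\mid l(c_{x_0,y})\le R\}$ rather than a metric ball, which is what makes the retraction $H_0(x)=c_{x_0,x}(R)$ a genuine retraction with $H_t\circ H_{t'}=H_{tt'}$; if you use $\overline{B_\rho(x_0)}$ you must verify that the bicombing cone you take is preserved by the retraction, which again uses consistency.
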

\begin{defi}
We define $P_r(x)\coloneqq\{y\in X\mid l(c_{x,y})\leq r\}$.
\end{defi}
Before we can prove \autoref{prop:transfer} we need the following technical statements, which are the analogs of \cite[Lemma 3.6 and Proposition 3.5]{flow} in our setting. They will be important to estimate distances in the flow space. The next lemma shows that in a triangle of trails the two trails run close to each other for a long time. This is used in the following proposition to prove the existence of a flow time $T$ moving these trails close together in the flow space.

\begin{lemma}
\label{lemma:3.6}
Let $r',L,\beta>0,r''>f(\beta)$ and $x_1,x_2\in X$ with $d(x_1,x_2)\leq\beta$ and $\tfrac{r''}{r''+2r'+f(\beta)+L}\geq 2/3$ be given. Let $r\geq r''+2r'+f(\beta), T\coloneqq r-r'-f(\beta), x\in P_{r+L}(x_1)$ and $\tau\coloneqq l(c_{x_2,x})-l(c_{x_1,x})$. Then for all $t\in [T-r',T+r']$ we have
\[d_X(c_{x_1,x}(t),c_{x_2,x}(t+\tau))\leq A\left(\tfrac{r''}{r''+2r'+f(\beta)+L},\beta,0\right)+\left(\tfrac{f(\beta)(2r'+f(\beta)+L)}{r''}\right).\]
\end{lemma}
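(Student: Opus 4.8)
The plan is to compare the two trails $c_{x_1,x}$ and $c_{x_2,x}$ by reparametrizing them proportionally, applying the $A$-convexity hypothesis at the appropriate parameter, and then correcting for the fact that the flow parameter $\tau$ does not exactly match the difference in path-length parametrizations point-by-point. First I would translate the setup: write $L_i \coloneqq l(c_{x_i,x})$ for $i = 1,2$, so that $c_{x_1,x}(t) = \gamma_{x_1,x}(t/L_1)$ on $[0,L_1]$ and $c_{x_2,x}(t+\tau) = \gamma_{x_2,x}((t+\tau)/L_2)$ on the relevant range, where $\tau = L_2 - L_1$. Note $|\tau| \le f(d(x_1,x_2)) \le f(\beta)$ by the hypothesis on $f$ and the monotonicity assumption from \autoref{rem:monoton}, and that $L_1 \ge l(c_{x,x_1}) \ge$ something controlled while $L_1 \le r + L$ since $x \in P_{r+L}(x_1)$; also $L_1 \ge r - f(\beta)$-ish because $l(c_{x_2,x})$ and $l(c_{x_1,x})$ differ by at most $f(\beta)$ and $d(x_2,x) $ is large (one uses $r \ge r'' + 2r' + f(\beta)$ and $r'' > f(\beta)$ to see $x \notin$ a small ball, hence $L_1$ is bounded below by roughly $r - r' - f(\beta)$). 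The point is that for $t \in [T - r', T + r']$ with $T = r - r' - f(\beta)$, both $t/L_1$ and $(t+\tau)/L_2$ lie in $[0,1]$ and are close to each other, and in fact $t/L_1$ is bounded below by $\tfrac{r''}{r'' + 2r' + f(\beta) + L}$ — this is exactly why that quantity, assumed $\ge 2/3$, appears: it lets us invoke the monotonicity of $A$ in the first variable on $[2/3,1]$.

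The core estimate proceeds in two steps via the triangle inequality:
\[
d_X\bigl(c_{x_1,x}(t), c_{x_2,x}(t+\tau)\bigr) \le d_X\bigl(\gamma_{x_1,x}(s), \gamma_{x_2,x}(s)\bigr) + d_X\bigl(\gamma_{x_2,x}(s), \gamma_{x_2,x}(s')\bigr),
\]
where $s \coloneqq t/L_1$ and $s' \coloneqq (t+\tau)/L_2$. For the first term, $A$-convexity of $\gamma$ with endpoints $x_1,x$ and $x_2,x$ gives the bound $A(s, d(x_1,x_2), 0) \le A(s,\beta,0)$, and since $s \ge \tfrac{r''}{r'' + 2r' + f(\beta) + L} \ge 2/3$, the monotonicity of $A$ in the first coordinate on $[2/3,1]$ (decreasing) yields $A(s,\beta,0) \le A\bigl(\tfrac{r''}{r'' + 2r' + f(\beta) + L}, \beta, 0\bigr)$, the first summand in the claim. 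For the second term, since $\gamma_{x_2,x}$ has constant speed $L_2$, we get $d_X(\gamma_{x_2,x}(s), \gamma_{x_2,x}(s')) \le L_2 |s - s'|$, so it remains to bound $L_2 |s-s'| = L_2 \left| \tfrac{t}{L_1} - \tfrac{t+\tau}{L_2} \right| = \left| \tfrac{L_2 t - L_1(t+\tau)}{L_1} \right| = \left| \tfrac{(L_2 - L_1)t - L_1 \tau}{L_1} \right| = \tfrac{|\tau|\,|t - L_1|}{L_1}$, using $\tau = L_2 - L_1$. Now $|t - L_1| \le 2r' + f(\beta) + L$ (since $t \le T + r' = r - f(\beta) + $ terms and $L_1 \le r + L$, so their difference is at most $2r' + f(\beta) + L$ after bookkeeping), while $L_1 \ge r'' $ by the lower bound on $L_1$ together with $r \ge r'' + 2r' + f(\beta)$; this gives $L_2|s-s'| \le \tfrac{f(\beta)(2r' + f(\beta) + L)}{r''}$, the second summand.

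The main obstacle will be the careful bookkeeping of the upper and lower bounds on $L_1$ and $L_2$ and verifying that $s = t/L_1$ genuinely lies in $[\tfrac{r''}{r''+2r'+f(\beta)+L}, 1]$ for every $t$ in the stated interval — in particular pinning down the lower bound $L_1 \ge r''$ (or whatever the precise constant turns out to be) requires using $x \in P_{r+L}(x_1)$, the inequality $|L_1 - L_2| \le f(\beta)$, the hypothesis $r'' > f(\beta)$, and $r \ge r'' + 2r' + f(\beta)$ in concert, and one must check that $(t+\tau)/L_2 \in [0,1]$ as well so that the trail formula $c_{x_2,x}(t+\tau) = \gamma_{x_2,x}((t+\tau)/L_2)$ is valid rather than being in the constant regime. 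Everything else is the two-term triangle inequality above plus the constant-speed property and $A$-convexity, both already available. Once the interval arithmetic is nailed down, assembling the two summands is immediate.
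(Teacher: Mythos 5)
Your argument is the same as the paper's, modulo notation: the intermediate point $\gamma_{x_2,x}(s)$ with $s = t/L_1$ you insert in the triangle inequality is exactly the $c_{x_2,x}\bigl(\tfrac{t}{l(c_{x_1,x})}l(c_{x_2,x})\bigr)$ the paper uses, the $A$-convexity bound and the arithmetic $L_2|s-s'| = |\tau|\tfrac{L_1-t}{L_1}$ match the paper's second term, and the final interval estimates $t/L_1 \ge \tfrac{r''}{r''+2r'+f(\beta)+L}$ and $\tfrac{L_1-t}{L_1} \le \tfrac{2r'+f(\beta)+L}{r''}$ are identical. The only thing you flag but don't carry out — checking that $t, t+\tau$ lie in $(0, L_1)$ and $(0, L_2)$ respectively — the paper settles quickly by observing that $t \ge T-r' \ge r'' > f(\beta) \ge |\tau|$ forces positivity, and that in the remaining degenerate case $t \ge L_1$ both sides equal $x$ so the inequality is trivial.
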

\begin{proof}
Let $t\in[T-r',T+r']$. From $T-r'\geq r''>f(\beta)$ and $|\tau|\leq f(\beta)$ we conclude $t,t+\tau>0$. If $t\geq l(c_{x_1,x})$, then $c_{x_1,x}(t)=x=c_{x_2,x}(t+\tau)$ and the statement of the lemma follows in this case. Therefore, we can assume $0<t<l(c_{x_1,x})$ and thus $0<t+\tau<l(c_{x_2,x})$. 
We obtain
\begin{align*}
&d_X(c_{x_1,x}(t),c_{x_2,x}(t+\tau))\\
\leq~& d_X(c_{x_1,x}(t),c_{x_2,x}(\tfrac{t}{l(c_{x_1,x})}l(c_{x_2,x})))+|t+\tau-\tfrac{t}{l(c_{x_1,x})}l(c_{x_2,x})|\\
\leq~& A(\tfrac{t}{l(c_{x_1,x})},\beta,0)+|t+\tau-\tfrac{t}{l(c_{x_1,x})}(l(c_{x_1,x})+\tau)|\\
=~&A(\tfrac{t}{l(c_{x_1,x})},\beta,0)+|\tfrac{l(c_{x_1,x})-t}{l(c_{x_1,x})}\tau|\\
\leq~&A(\tfrac{t}{l(c_{x_1,x})},\beta,0)+(f(\beta)\tfrac{l(c_{x_1,x})-t}{l(c_{x_1,x})})
\end{align*}
where the second inequality follows from the $A$-convexity of the bicombing.\\
Let $a:=r-r''-2r'-f(\beta)>0$. Since $l(c_{x_1,x})\leq r+L$ and $t\geq r-2r'-f(\beta)=r''+a$, we have
\[\tfrac{t}{l(c_{x_1,x})}\geq \tfrac{r''+a}{r''+2r'+f(\beta)+L+a}\geq \tfrac{r''}{r''+2r'+f(\beta)+L}\]
and the lemma follows from the monotonicity assumption in \autoref{rem:monoton} on $A$.
\end{proof}
\begin{prop}
\label{prop:3.5}
Let $\beta,L>0$ be given. For all $\delta>0$ there are $T,r>0$ such that for all $x_1,x_2\in X$ with $d(x_1,x_2)\leq\beta,x\in P_{r+L}(x_1)$ there is $\tau\in[-f(\beta),f(\beta)]$ such that
\[d_{FS}(\Phi_T(c_{x_1,c_{x_1,x}(r)}),\Phi_{T+\tau}(c_{x_2,c_{x_2,x}(r)}))\leq\delta.\]
\end{prop}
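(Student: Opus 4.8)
The plan is to unwind the definition of the metric on $FS(X,\gamma)$ and split the resulting integral into a central part, controlled by \autoref{lemma:3.6}, and two tails, where the exponential weight lets one use a crude bound that does not depend on $r$ or $T$. Writing $y_i:=c_{x_i,x}(r)$ we have
\[d_{FS}\big(\Phi_T(c_{x_1,y_1}),\Phi_{T+\tau}(c_{x_2,y_2})\big)=\int_\bbR\frac{d_X\big(c_{x_1,y_1}(t+T),\,c_{x_2,y_2}(t+T+\tau)\big)}{2e^{|t|}}\,dt,\]
and I would estimate the integrand on $\{|t|\le r'\}$ and on $\{|t|>r'\}$ separately. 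First some notation: put $l_i:=l(c_{x_i,x})=l(\gamma_{x_i,x})$ and choose $\tau:=l_2-l_1$. Since $|l(\gamma_{x_2,x})-l(\gamma_{x_1,x})|\le f(d(x_1,x_2)+d(x,x))=f(d(x_1,x_2))$ and $f$ is monotone (\autoref{rem:monoton}), we get $|\tau|\le f(\beta)$, so $\tau\in[-f(\beta),f(\beta)]$ as required; note that this is exactly the $\tau$ occurring in \autoref{lemma:3.6}. Consistency of $\gamma$ (the same phenomenon as in \autoref{lem:res}) gives, for $i=1,2$,
\[c_{x_i,y_i}(s)=c_{x_i,x}\big(\min\{s,r\}\big)\qquad\text{for all }s\in\bbR,\]
which I would use repeatedly to pass between the truncated trails $c_{x_i,y_i}$ and the full trails $c_{x_i,x}$.

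For the tails I would prove the uniform bound
\[d_X\big(c_{x_1,y_1}(t+T),\,c_{x_2,y_2}(t+T+\tau)\big)\le C(\beta):=\max_{u\in[0,1]}A(u,\beta,0)+2f(\beta)\qquad\text{for all }t\in\bbR.\]
By the identity above each of the two points is of the form $\gamma_{x_i,x}(\sigma_i)$ for some $\sigma_i\in[0,1]$; applying $A$-convexity at the common parameter $\sigma_1$ and then comparing $\gamma_{x_2,x}(\sigma_1)$ with $\gamma_{x_2,x}(\sigma_2)$ using the constant speed $l_2$ of $\gamma_{x_2,x}$ together with $|l_1-l_2|\le f(\beta)$ and $|\tau|\le f(\beta)$ (a routine computation) yields $C(\beta)$, crucially with no dependence on $r$, $T$, $x_1$, $x_2$ or $x$. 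Hence $\{|t|>r'\}$ contributes at most $C(\beta)\int_{|t|>r'}\frac{dt}{2e^{|t|}}=C(\beta)e^{-r'}$. On the central part, $T=r-r'-f(\beta)$ forces $t+T<r$ and $t+T+\tau\le r$ whenever $|t|\le r'$, so by the displayed identity the integrand equals $d_X\big(c_{x_1,x}(t+T),c_{x_2,x}((t+T)+\tau)\big)$ with $t+T$ running over $[T-r',T+r']$; \autoref{lemma:3.6}, applied with our $r',L,\beta$, the $r''$ and $r=r''+2r'+f(\beta)$ chosen below, $T=r-r'-f(\beta)$, the point $x\in P_{r+L}(x_1)$ and $\tau$ as above, bounds it there by
\[E:=A\Big(\tfrac{r''}{r''+2r'+f(\beta)+L},\beta,0\Big)+\tfrac{f(\beta)\,(2r'+f(\beta)+L)}{r''},\]
so $\{|t|\le r'\}$ contributes at most $E$. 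Altogether $d_{FS}\big(\Phi_T(c_{x_1,y_1}),\Phi_{T+\tau}(c_{x_2,y_2})\big)\le E+C(\beta)e^{-r'}$.

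It remains to pick the constants in the right order. Given $\delta>0$, first choose $r'>0$ with $C(\beta)e^{-r'}<\delta/2$; this uses only $\beta$ and the fixed data $A,f$, not $r$. Then choose $r''>f(\beta)$ so large that $\tfrac{r''}{r''+2r'+f(\beta)+L}\ge 2/3$ (so that \autoref{lemma:3.6} applies), that $A\big(\tfrac{r''}{r''+2r'+f(\beta)+L},\beta,0\big)<\delta/4$ (possible since $\tfrac{r''}{r''+2r'+f(\beta)+L}\to 1$ as $r''\to\infty$ and $A$ is continuous with $A(1,\beta,0)=0$), and that $\tfrac{f(\beta)\,(2r'+f(\beta)+L)}{r''}<\delta/4$. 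Finally set $r:=r''+2r'+f(\beta)$ and $T:=r-r'-f(\beta)$; then $E<\delta/2$, and the total is $<\delta$, as desired.

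The main obstacle is exactly the tail estimate. The naive $1$-Lipschitz bound on the integrand only gives something of order $\beta+2|t+T|$, whose weighted tail integral is of order $Te^{-r'}$ and blows up because $T$ must be taken large (comparable to $r''$); so the argument cannot proceed by first fixing $r$ and then $r'$. Replacing this by the $r$-independent constant $C(\beta)$, via the identity $c_{x_i,y_i}(s)=c_{x_i,x}(\min\{s,r\})$ and a single use of $A$-convexity, is precisely what lets one choose $r'$ first and only afterwards $r''$ (hence $r$), and it is the only nonroutine point; everything else is bookkeeping and one application of \autoref{lemma:3.6}.
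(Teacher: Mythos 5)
Your proof is correct and follows the paper's own proof closely: the same choice $\tau = l(c_{x_2,x})-l(c_{x_1,x})$, the same split of the integral defining $d_{FS}$ at $\pm r'$ with \autoref{lemma:3.6} controlling the central part, and the same crucial order of constants ($r'$ fixed from $\delta$ alone, $r''$ and hence $r$, $T$ only afterwards). The only point where you diverge is the tail estimate: the paper anchors a $1$-Lipschitz bound at $t=\pm r'$, using the Lemma~\ref{lemma:3.6} bound $\delta'<1$ there to get the integrand $\le 2|t|+2\delta'\le 2(|t|+1)$, whereas you produce a $t$-independent bound $C(\beta)=\max_u A(u,\beta,0)+2f(\beta)$ directly from $A$-convexity and the identity $c_{x_i,y_i}(s)=c_{x_i,x}(\min\{s,r\})$; both make the tail integral depend on $r'$ alone, and the difference is cosmetic rather than structural. (Incidentally, the ``naive'' Lipschitz tail bound you warn against, $\beta+2|t+T|$, comes from anchoring at $t=-T$; the paper avoids that circularity not via your uniform constant but by anchoring at $t=\pm r'$ instead.)
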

\begin{proof}
Let $\beta,L,\delta$ be given. Pick $r'>1,r''>f(\beta),1>\delta'>0$ such that
\[\int_{-\infty}^{-r'}\frac{1+|t|}{e^{|t|}}dt\leq\frac{\delta}{3},\qquad\int_{-r'}^{r'}\frac{\delta'}{e^{|t|}}dt\leq\frac{\delta}{3},\qquad \tfrac{r''}{r''+2r'+f(\beta)+L}\geq 2/3\]
and
\[A(\tfrac{r''}{r''+2r'+f(\beta)+L},\beta,0)+(\tfrac{f(\beta)(2r'+f(\beta)+L)}{r''})\leq \delta'.\]
Define $r\coloneqq2r'+r''+f(\beta)$ and $T\coloneqq r-r'-f(\beta)$. Let $x_1,x_2\in X$ with $d_X(x_1,x_2)\leq\beta$ and $x\in P_{r+L}(x_1)$ be given. Set $\tau\coloneqq l(c_{x_2,x})-l(c_{x_1,x})$. Then $|\tau|\leq f(\beta)$. By \autoref{lemma:3.6} for all $t\in[-r',r']$ we have
\[
d(c_{x_1,x}(T+t),c_{x_2,x}(T+t+\tau))\leq A(\tfrac{r''}{r''+2r'+f(\beta)+L},\beta,0)+(\tfrac{f(\beta)(2r'+f(\beta)+L)}{r''})<\delta'.\]

Furthermore, for $t\in[-r',r']$ we have
\[0<r''\leq T+t=r'+r''+t\leq r,\quad 0<r''+\tau\leq T+t+\tau\leq r\]
and thus by consistency of the bicombing we obtain
\[c_{x_1,x}(T+t)=c_{x_1,c_{x_1,x}(r)}(T+t),\quad c_{x_2,x}(T+t+\tau)=c_{x_2,c_{x_2,x}(r)}(T+t+\tau).\]
This implies
\begin{align*}
&d_{FS}(\Phi_T(c_{x_1,c_{x_1,x}(r)}),\Phi_{T+\tau}(c_{x_2,c_{x_2,x}(r)}))\\
=~&\int_{-\infty}^\infty\tfrac{d(c_{x_1,c_{x_1,x}(r)}(T+t),c_{x_2,c_{x_2,x}(r)}(T+t+\tau))}{2e^{|t|}}dt\\
\leq~&\int_{-\infty}^{-r'}\tfrac{2|t|+2\delta'}{2e^{|t|}}dt+\int_{-r'}^{r'}\tfrac{2\delta'}{2e^{|t|}}dt+\int_{r'}^{\infty}\tfrac{2\delta'+2|t|}{2e^{|t|}}dt\\
\leq~& \int_{-\infty}^{-r'}\tfrac{1+|t|}{e^{|t|}}dt+\int_{-r'}^{r'}\tfrac{\delta'}{e^{|t|}}dt+\int_{r'}^{\infty}\tfrac{1+t}{e^{|t|}}dt\\
\leq~& \tfrac{\delta}{3}+\tfrac{\delta}{3}+\tfrac{\delta}{3}=\delta. \qedhere
\end{align*}
\end{proof}

\begin{proof}[Proof of \autoref{prop:transfer}]
Let a basepoint $x_0\in X$ be given. We will use the the space $P_R(x_0)$, for $R=R(S, k, \delta)$ big enough, to verify the existence of contracting transfers.

Let a finite subset $S\subseteq G$, $\delta>0$ and $k\in\bbN$ be given. Let $N\coloneqq  2\dim(X)+1$, $\beta'\coloneqq \max\{d(gx_0,hx_0)\mid g,h\in S\}$ and $\beta\coloneqq (k+1)f(\beta')$. By \autoref{prop:3.5} there exist $T,R>0$ such that for all $x_1,x_2\in X$ with $d(x_1,x_2)\leq\beta'$ and $x\in P_{R+\beta}(x_1)$ there is $\tau\in[-f(\beta'),f(\beta')]$ such that
\begin{equation} d(\Phi_T(c_{x_1,c_{x_1,x}(R)}),\Phi_{T+\tau}(c_{x_2,c_{x_2,x}(R)}))\leq \frac{\delta}{e^{\beta}(k+1)}.\label{eq:?}\end{equation}
Define a deformation retraction of $X$ onto $P_R(x_0)$ by
\[H\colon X\times [0,1]\to X,~\quad c_{x_0,x}((1-t)(R-l(c_{x_0,x}))+l(c_{x_0,x})).\]
Since $\gamma$ is consistent, we have that $H_t\circ H_{t'}=H_{tt'}$ and by \cite[Remark 2.2]{wegnercat} we can define a strong homotopy action $\Psi\coloneqq H_0\circ \Omega$ on $P_R(x_0)$ with
\[\Omega\colon \coprod_{j=0}^\infty(G\times[0,1])^j\times G\times \overline{P_R(x_0)}\to X,\]
\[\Omega(g,x)\coloneqq gx,\quad \Omega(g_j,t_j,g_{j-1},\ldots)\coloneqq g_jH_{t_j}(\Omega(g_{j-1},\ldots)).\]
The space $P_R(x_0)$ is a contractible, compact, controlled $2\dim(X)+1$-dominated metric space by \autoref{lem:6.2}. We obtain a $G$-equivariant map
\[\iota: G\times P_R(x_0) \rightarrow FS(X,\gamma),\qquad (g,x)\mapsto c_{gx_0,gx}.\]

It remains to prove \autoref{def:sct}~($\ast$). 
Now let $z\in P_R(x_0)$ and $a\in G$ be given. By induction on $m=0,\ldots,k$ we want to show that for $f\in F_a(\Psi,S,m)$ there exists $\tau\in[-(m+1)f(\beta'),(m+1)f(\beta')]$ such that

\[d(\Phi_T\iota(e,z),\Phi_{T+\tau}\iota(a^{-1},f(z)))\leq \frac{(m+1)\delta}{k+1}.\]

For $m=0$ we have $f(z)=\Psi(a,z)=H_0(az)=c_{az,x_0}(l(c_{az,x_0})-R)$. Thus by \eqref{eq:?} there is $\tau\in[-f(\beta'),f(\beta')]$ with
\begin{align*}
d(\Phi_T\iota(e,z),\Phi_{T+\tau}\iota(a^{-1},f(z)))=~&
d(\Phi_Tc_{x_0,z},\Phi_{T+\tau}c_{a^{-1}x_0,c_{a^{-1}x_0,z}(R)})\\
\leq~&\frac{\delta}{e^\beta(k+1)}\leq~\frac{\delta}{k+1} .\end{align*}
Now let us consider the induction step. By definition of $F_{a}(\Psi,S,m)$ there are $g_m,\ldots,g_0\in S$, $t_m,\ldots,t_1\in [0,1]$ with $a=g_m\ldots g_0$ and 
\[f(z)=H_0(\Omega(g_m,t_m,\ldots, g_0,z))=H_0(g_m H_{t_m}(\Omega(g_{m-1},t_{m-1},\ldots, g_0,z))).\]
Define $\overline{f}\coloneqq  H_{t_m}\circ \Omega(g_{m-1},t_{m-1},\ldots, g_0,\_),f'\coloneqq H_0\circ \overline{f}\in F_{g_m^{-1}a}(\Psi, S, m-1)$.
By induction assumption, there is a $\tau_1\in[-mf(\beta'),mf(\beta')]$ with
\[d(\Phi_T\iota(e,z),\Phi_{T+\tau_1}\iota(a^{-1}g_m,f'(z)))\leq \frac{m\delta}{(k+1)}.\]
By \eqref{eq:?} there is $\tau_2\in [-f(\beta'),f(\beta')]$ with 

\begin{align*}&d(\Phi_{T}\iota(e,f'(z)),\Phi_{T+\tau_2}\iota(g_m^{-1},f(z)))\\
=~&d(\Phi_T(c_{x_0,H_0(\overline{f}(z))}),\Phi_{T+\tau_2}(c_{g_m^{-1}x_0,g_m^{-1}c_{x_0,g_m\overline{f}(z)}(R)}))\\
=~&d(\Phi_T(c_{x_0,c_{x_0,\overline{f}(z)}(R)}),\Phi_{T+\tau_2}(c_{g_m^{-1}x_0,c_{g_{m}^{-1}x_0,\overline{f}(z)}(R)}))\\\leq~& \frac{\delta}{e^\beta(k+1)}.\end{align*}
Therefore, by \autoref{lem:1.4}
we obtain
\[d(\Phi_{T+\tau_1}\iota(e,f'(z)),\Phi_{T+\tau_1+\tau_2}\iota(g_m^{-1},f(z)))\leq \frac{\delta}{k+1}.\]
Let $\tau\coloneqq \tau_1+\tau_2$. Now we can use the triangle inequality:
\begin{align*}
&d(\Phi_T\iota(e,z),\Phi_{T+\tau}\iota(a^{-1},f(z)))\\
\le~&d(\Phi_T\iota(e,z),\Phi_{T+\tau_1}\iota(a^{-1}g_m,f'(z)))\\
&+d(\Phi_{T+\tau_1}\iota(a^{-1}g_m,f'(z)),\Phi_{T+\tau_1+\tau_2}\iota(a^{-1},f(z)))\\
\le~& \frac{m\delta}{k+1}+\frac{\delta}{k+1}=\frac{(m+1)\delta}{k+1}.\qedhere
\end{align*}
\end{proof}
\begin{lemma}
For every $x\in X$, $R>0$ the space $P_R(x)$ is compact, contractible and a neighborhood of $x$.
\end{lemma}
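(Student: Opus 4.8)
The plan is to establish the three properties — compact, contractible, and a neighborhood of $x$ — one at a time, each being a short argument built from the three ingredients of \autoref{assumption}: the length estimate through $f$, the elementary bound $l(\gamma_{x,y})\ge d(x,y)$, and consistency of $\gamma$. Throughout I would use that $\gamma_{x,x}\equiv x$, so $l(c_{x,x})=l(\gamma_{x,x})=0$, which already gives $x\in P_R(x)$.

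\emph{Neighborhood.} I would apply the hypothesis $|l(\gamma_{x,y})-l(\gamma_{x',y'})|\le f(d(x,x')+d(y,y'))$ with $x'=y'=x$ to get $l(c_{x,y})=l(\gamma_{x,y})\le f(d(x,y))$ for all $y\in X$. Since $f$ is continuous with $f(0)=0$ and $R>0$, there is $\epsilon>0$ with $f(t)\le R$ whenever $t\le\epsilon$; hence $B_\epsilon(x)\subseteq P_R(x)$, so $P_R(x)$ is a neighborhood of $x$.

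\emph{Compactness.} The same estimate with $x'=x$ shows that $y\mapsto l(\gamma_{x,y})$ is continuous, so $P_R(x)$ is closed in $X$. Each $\gamma_{x,y}$ is a rectifiable path from $x$ to $y$, so $l(\gamma_{x,y})\ge d(x,y)$, and therefore $P_R(x)\subseteq\overline{B_R(x)}$. Properness of $X$ makes this closed ball compact, so $P_R(x)$, being a closed subspace of it, is compact.

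\emph{Contractibility.} Here the idea is to contract $P_R(x)$ to $x$ directly along the bicombing, via
\[H\colon P_R(x)\times[0,1]\to X,\qquad H(y,t)=\gamma(x,y,t),\]
which is continuous because $\gamma$ is, and satisfies $H(y,0)=x$, $H(y,1)=y$. The only step requiring an argument — and the real content of the lemma — is that $H$ in fact takes values in $P_R(x)$: by consistency, for fixed $t$ the combing path $\gamma_{x,\gamma_{x,y}(t)}$ is a reparametrization of the initial segment $\gamma_{x,y}|_{[0,t]}$, which, as $\gamma_{x,y}$ has constant speed, has length $t\cdot l(\gamma_{x,y})\le l(\gamma_{x,y})\le R$, so $\gamma_{x,y}(t)\in P_R(x)$. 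Then $H$ is a homotopy within $P_R(x)$ between the constant map at $x$ and $\id_{P_R(x)}$, so $P_R(x)$ is contractible. I expect this consistency check to be the only point where one has to be careful; the compactness and neighborhood parts are routine.
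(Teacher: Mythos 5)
Your proof is correct and follows essentially the same route as the paper's: compactness via properness of $X$ and the inclusion of $P_R(x)$ in a closed ball, the neighborhood property from $l(c_{x,y})\le f(d(x,y))$ together with $f(0)=0$ and continuity of $f$, and contractibility via the combing paths from $x$, which stay in $P_R(x)$ by consistency. If anything your version is slightly tighter than the paper's: you explicitly verify that $P_R(x)$ is closed (via continuity of $y\mapsto l(\gamma_{x,y})$), you use the cleaner inclusion $P_R(x)\subseteq\overline{B_R(x)}$ coming from $l(\gamma_{x,y})\ge d(x,y)$ rather than the paper's $\overline{B_{f(R)}(x)}$ (which implicitly needs $f(R)\ge R$), and you spell out the consistency computation showing $l(c_{x,\gamma_{x,y}(t)})=t\cdot l(\gamma_{x,y})\le R$, which the paper leaves as a one-line remark.
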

\begin{proof}
Since $X$ is proper, the closed ball $\overline {B_{f(R)}(x_0)}$ is compact. Therefore, also the closed subspace $P_R(x_0)\subseteq \overline{B_{f(R)}(x_0)}$ is compact. The space $P_R(x_0)$ inherits from $X$ a metric and is contractible with a contraction given by the chosen paths from $x_0$. Those paths stay in $P_R(x_0)$ because the bicombing is consistent.

There exists $r$ with $f(r)\leq R$ and thus $B_r(x)\subseteq P_{f(r)}(x)\subseteq P_R(x)$ and hence $P_R(x)$ is a neighborhood of $x$.
\end{proof}
\begin{lemma}
\label{lem:6.1}
The space $X$ is a Euclidean neighborhood retract, i.e., there is a
natural number $N$, a closed subset $A\subseteq\bbR^N$, an open neighborhood $U$ of $A$ in $\bbR^N$ and a map $r\colon U\to A$ such that $r|_A = \id_A$ and $X$ is homeomorphic to $A$. The number $N$ can be chosen to be $2\dim(X)+1$.
\end{lemma}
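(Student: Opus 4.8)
The plan is to verify that $X$ satisfies the classical conditions under which a metrizable space is a Euclidean neighbourhood retract, and then to quote the relevant dimension‑theoretic and retract‑theoretic results. Concretely, a separable metrizable space that is locally compact, of finite covering dimension $n$, and locally contractible embeds as a closed subset of $\bbR^{2n+1}$, and a closed copy of such a space in $\bbR^{2n+1}$ is an ANR and hence a retract of an open neighbourhood; taking $N=2\dim(X)+1$ this is exactly the assertion of the lemma. So it suffices to check four properties of $X$: local compactness, separability, finite covering dimension, and local contractibility.

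The first three are immediate from what is already available. Since $X$ is proper it is locally compact; a proper metric space is $\sigma$‑compact, hence second countable, hence separable; and $\dim X=:n$ is finite by \autoref{assumption}. The only point where the bicombing is used is local contractibility.

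For local contractibility I would fix $x\in X$ and an open neighbourhood $V$ of $x$. If $y\in P_R(x)$, then the chosen path $\gamma_{x,y}$ runs from $x$ to $y$ and has length $l(c_{x,y})\le R$, so $d(x,y)\le l(c_{x,y})\le R$; hence $P_R(x)\subseteq\overline{B_R(x)}$. Choosing $R>0$ with $\overline{B_R(x)}\subseteq V$, the previous lemma shows that $P_R(x)$ is a compact, contractible neighbourhood of $x$ with $P_R(x)\subseteq V$, and, since $\gamma$ is consistent, the contraction $P_R(x)\times[0,1]\to P_R(x)$, $(y,t)\mapsto\gamma(x,y,1-t)$, takes values in $P_R(x)$. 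Consequently the interior of $P_R(x)$ is an open neighbourhood of $x$ that is null‑homotopic inside $V$, which is what local contractibility demands.

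Finally I would put the pieces together. By the version of the Menger--N\"obeling embedding theorem valid for locally compact separable metrizable spaces of finite dimension $n$ (see \cite{engelking1978dimension}), $X$ is homeomorphic to a closed subset $A\subseteq\bbR^{2n+1}$. Being homeomorphic to $X$, the space $A$ is locally compact, of covering dimension $n$, and locally contractible, hence an ANR; and since $A$ is a closed subspace of the metrizable space $\bbR^{2n+1}$, the defining property of an ANR yields an open neighbourhood $U$ of $A$ in $\bbR^{2n+1}$ and a retraction $r\colon U\to A$ with $r|_A=\id_A$. The main obstacle is not any serious new mathematics — the local‑contractibility check above is the only genuinely new step — but rather locating the precise statements in the literature: one needs the embedding theorem in the form giving a \emph{closed} copy of $X$ in $\bbR^{2\dim X+1}$, together with the classical fact that a finite‑dimensional, locally compact, separable, metrizable, locally contractible space is an ANR (which follows, e.g., from Borsuk's theorem on ANRs and Hanner's local characterisation of ANRs).
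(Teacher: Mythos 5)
Your proof is correct and follows essentially the same route as the paper: verify that $X$ is locally compact, second countable, finite dimensional, and locally contractible (using $P_R(x)$ exactly as the paper does), embed $X$ as a closed subset of $\bbR^{2\dim X+1}$, and conclude the ENR property from standard ANR theory. The only difference is in the references cited — the paper appeals to an exercise in Munkres for the embedding and to Dold's Proposition IV.8.12 for the ANR step, while you invoke the Menger--N\"obeling theorem and Borsuk/Hanner — but the underlying argument is identical.
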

\begin{proof}
Since $X$ is proper as a metric space, it is locally compact and has a countable basis for its topology. To see the latter take some $x\in X$ and for every $n\in\bbN$ choose a finite cover $\{B_{1/n}(y_i)\}_{y_i\in X}$ of $B_n(x)$. This gives a countable basis for the topology.
Obviously $X$ is Hausdorff. By assumption $\dim(X)<\infty$. We conclude from \cite[Exercise 6 in Chapter 50 on page 315]{munkres} that $X$ is homeomorphic to a closed subset $A$ of $\bbR^N$ for $N = 2\dim(X) + 1$. 
For every $x\in X, \epsilon >0$ the subspace $P_\epsilon(x)\subseteq B_\epsilon(x)$ is a contractible neighborhood of $x$, hence $X$ is locally contractible. Now the lemma follows from \cite[Proposition IV.8.12]{dold1972lectures}. 
\end{proof}
\begin{lemma}
\label{lem:6.2}
The space $P_R(x_0)$ is a compact contractible metric space which is controlled $(2\dim(X)+1)$-dominated.
\end{lemma}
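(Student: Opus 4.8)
The plan is to derive the statement from \autoref{lem:6.1} together with the deformation retraction of $X$ onto $P_R(x_0)$ that already appears in the proof of \autoref{prop:transfer}. That $P_R(x_0)$ is a compact contractible metric space has been recorded in an earlier lemma, so the only remaining point is that $P_R(x_0)$ is controlled $(2\dim(X)+1)$-dominated. Here I would invoke the remark, made right after the definition of controlled domination, that we may arrange $p\circ i=\id$; then the homotopy $H$ required in that definition can be taken to be the constant homotopy, its tracks are single points, and the diameter bound holds for every $\epsilon>0$ automatically. So, writing $N\coloneqq 2\dim(X)+1$, it is enough to realize $P_R(x_0)$ as a retract of a finite CW-complex $K$ with $\dim K\le N$.

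First I would upgrade the Euclidean neighborhood retraction of \autoref{lem:6.1} to one whose image is $P_R(x_0)$. By \autoref{lem:6.1} we may regard $X$ as a closed subset of $\bbR^N$ equipped with a retraction $r\colon U\to X$ defined on an open neighborhood $U$ of $X$ in $\bbR^N$. Let $\rho\colon X\to P_R(x_0)$ be the time-$0$ map $H_0$ of the deformation retraction $H$ of $X$ onto $P_R(x_0)$ from the proof of \autoref{prop:transfer}; it is continuous and restricts to the identity on $P_R(x_0)$. Then $\widetilde{\rho}\coloneqq\rho\circ r\colon U\to P_R(x_0)$ is a continuous retraction, because for $z\in P_R(x_0)\subseteq X\subseteq U$ we have $\widetilde{\rho}(z)=\rho(r(z))=\rho(z)=z$.

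Finally I would cut $U$ down to a finite complex in the usual way. Since $P_R(x_0)$ is compact it is closed in $\bbR^N$, so there is $\eta>0$ such that the closed $\eta$-neighborhood of $P_R(x_0)$ in $\bbR^N$ is contained in $U$. Fix a locally finite triangulation of $\bbR^N$ all of whose simplices have diameter less than $\eta$, and let $K$ be the union of all its closed simplices meeting $P_R(x_0)$; this is a finite subcomplex, hence a finite CW-complex of dimension at most $N$, and $P_R(x_0)\subseteq K\subseteq U$. Taking $i\colon P_R(x_0)\hookrightarrow K$ to be the inclusion and $p\coloneqq\widetilde{\rho}|_K\colon K\to P_R(x_0)$ gives $p\circ i=\id_{P_R(x_0)}$, which, with $H$ the constant homotopy, finishes the verification of controlled $N$-domination. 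The one mildly delicate step — the one I would be careful about — is the passage from a Euclidean neighborhood retraction onto $X$ to one onto $P_R(x_0)$; this is precisely where consistency of $\gamma$ is used, since it is what makes $H_0$ take values in $P_R(x_0)$. Everything else is a routine subcomplex argument.
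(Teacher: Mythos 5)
Your argument is correct and follows essentially the same route as the paper: starting from \autoref{lem:6.1}, both extract a finite simplicial complex $K$ of dimension $2\dim X+1$ from a triangulation of the Euclidean neighborhood retract and compose the retraction $r$ with $H_0=c_{x_0,\_}(R)$ to obtain a retraction $K\to P_R(x_0)$, noting that a genuine retract needs only the constant homotopy. The only cosmetic difference is that the paper triangulates the open set $U$ directly and uses compactness of $i(P_R(x_0))$ to find a finite subcomplex, whereas you triangulate all of $\bbR^N$ by simplices of diameter $<\eta$ and use an $\eta$-neighborhood argument to keep $K$ inside $U$.
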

\begin{proof}
Because of \autoref{lem:6.1} we can find an open subset $U\subseteq \bbR^{2\dim(X)+1}$ and maps $i\colon X\to U$ and $r\colon U\to X$ with $r\circ i = \id_X$. Since $U$ is a smooth manifold, it can be triangulated and hence is a simplicial complex of dimension $(2\dim(X)+1)$. Since $P_R(x_0)$ is compact, $i(P_R(x_0))$ is compact and hence contained in a finite subcomplex $K\subseteq U$.
Let $i'\colon  P_R(x_0)\to K$ be the map defined by $i$. Let be $r'\colon K\to P_R(x_0)$ be the composite
\[K\xrightarrow{r|_K}X\xrightarrow{c_{x_0,\_}(R)} P_R(x_0).\]
Then $r'\circ i'=\id_{P_R(x_0)}$ and $K$ is a finite $(2\dim(X)+1)$-dimensional simplicial complex. This implies that $P_R(x_0)$ is controlled $(2\dim(X)+1)$-dominated.
\end{proof}
\section{The main theorem}
\label{sec:main}
\begin{thm}
\label{thm:main}
Let $G$ be a group as in \autoref{assumption}. Then the Farrell--Jones conjecture with finite wreath products holds for $G$.
\end{thm}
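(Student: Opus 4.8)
The plan is to feed the geometric input assembled in \autoref{sec:flow} and \autoref{sec:trans} into the now-standard machinery that deduces the Farrell--Jones conjecture from a cocompact flow space carrying contracting transfers, applied uniformly to every group covered by \autoref{assumption} (and then in particular to $G$ and to its wreath products). Since $G$ acts properly and cocompactly on the proper, hence separable, path-connected (via $\gamma$) space $X$, it is countable, so $FS(X,\gamma)$ from \autoref{def:flowspace} is a flow space for $G$ in the sense of \autoref{def:flow}: the induced $G$-action is isometric, proper by \autoref{lem:properac}, and cocompact by \autoref{cor:cocompactac}; by \autoref{cor:findim} its covering dimension is at most $2\dim X+1<\infty$. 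Moreover the flow-stationary subspace $FS(X,\gamma)^{\bbR}$ is exactly the set of constant trails $c_{x,x}$, which is $G$-equivariantly homeomorphic to $X$ via $ev_0$ and hence carries a cocompact $G$-action; in particular all periodic flow lines are constant. These are precisely the conditions under which long covers of a flow space are available.

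Next I would invoke the existence of long covers: for every $\alpha>0$ there is a $G$-invariant $\VCyc$-cover $\calu$ of $FS(X,\gamma)$ of covering dimension bounded by a constant depending only on $\dim X$ such that every $c\in FS(X,\gamma)$ admits a $U\in\calu$ with $\Phi_{[-\alpha,\alpha]}(c)\subseteq U$. This is the flow-space long thin cover theorem of Bartels and L\"uck \cite{flow}, whose hypotheses are met by the previous paragraph --- the one nontrivial point, the behaviour of $\Phi$ on and near $FS(X,\gamma)^{\bbR}\cong X$, being controlled by the metric estimates of \autoref{lem:1.4} together with cocompactness of the $G$-action on $X$; alternatively this construction is already incorporated in the abstract framework of \cite{covers}. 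Combining these long covers with the strong contracting transfers provided by \autoref{prop:transfer}, the flow space $FS(X,\gamma)$ satisfies all hypotheses of the abstract criterion of \cite{covers}, which therefore yields the $K$- and $L$-theoretic Farrell--Jones conjecture for $G$.

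To upgrade this to the Farrell--Jones conjecture with finite wreath products I would either appeal to the fact that the criterion of \cite{covers} already delivers the wreath-product version, or argue directly: for a finite group $F$ the wreath product $G\wr F=\bigl(\prod_F G\bigr)\rtimes F$ acts properly and cocompactly on the $\ell^2$-product $\prod_F X$, which is again proper and finite-dimensional, and by the product lemma of \autoref{sec:bicomb} this product carries a consistent, constant-speed, $\overline A$-convex bicombing (with $\overline A$ of the form required in \autoref{assumption} after the adjustment of \autoref{rem:monoton}) that is invariant under the coordinate permutations, hence equivariant for the full $G\wr F$-action; a suitable $\overline f$ is obtained in the same way. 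Thus $G\wr F$ again satisfies \autoref{assumption}, so the first two paragraphs applied to $G\wr F$ give the Farrell--Jones conjecture for $G\wr F$, and letting $F$ vary proves \FJCw{} for $G$. The same reasoning applied to the Busemann groups of \autoref{ex:bus} and to the groups of \autoref{ex:prod} specializes to \autoref{thm:prodgraphs} and to the hyperbolic and CAT(0) cases.

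I expect the genuine difficulty to be essentially behind us: the hard inputs --- finite-dimensionality of $FS(X,\gamma)$, resting on consistency of $\gamma$ (\autoref{lem:A-findim}, \autoref{cor:findim}), and the strong contracting transfers (\autoref{prop:transfer}) --- are already established, so the remaining work is the careful verification that $FS(X,\gamma)$ meets every hypothesis of the long thin cover theorem and of the abstract assembly-map criterion, and the bookkeeping of the constants (the dimension bound and the number $N$ in \autoref{def:sct}) through that verification.
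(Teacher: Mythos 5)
Your proposal is correct and follows essentially the same route as the paper: verify that $FS(X,\gamma)$ is a cocompact, finite-dimensional flow space (\autoref{cor:cocompactac}, \autoref{lem:properac}, \autoref{cor:findim}) admitting strong contracting transfers (\autoref{prop:transfer}), and then invoke \cite[Corollary 9.9]{covers}, which already delivers the finite-wreath-product version of the conjecture. The separate discussion of long thin covers and the alternative direct wreath-product argument you sketch are superfluous for the proof, since both are subsumed in that cited abstract criterion.
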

\begin{proof}
By \autoref{cor:cocompactac} and \autoref{lem:properac} the action of $G$ on $FS(X,\gamma)$ is isometric, proper and cocompact and hence $FS(X,\gamma)$ is a cocompact flow space for $G$. The flow space $FS(X,\gamma)$ is also finite-dimensional by \autoref{cor:findim} and it admits strong contracting transfers by \autoref{prop:transfer}. The main theorem now follows from the following theorem.
\end{proof}
\begin{thm}[{\cite[Corollary 9.9]{covers}}]
If $X$ is a cocompact, finite-dimensional flow space for the group
$G$, which admits strong contracting transfers, then $G$ is strongly transfer reducible with respect to the family $\VCyc$, in particular $G$ satisfies $\FJCw{}$.
\end{thm}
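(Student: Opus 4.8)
The statement is \cite[Corollary~9.9]{covers}, so the plan is to recall the argument given there, which assembles two ingredients. The first ingredient is the hypothesis itself, unwound through \autoref{def:sct}: there is a single $N\in\bbN$ such that for every finite $S\subseteq G$ and every $k\in\bbN$ one obtains a constant $\beta$, and then for every $\delta>0$ a flow time $T$, a contractible compact controlled $N$-dominated space $X$, a strong homotopy action $\Psi$ on $X$, and a $G$-equivariant map $\iota\colon G\times X\to FS$ with property $(\ast)$. The second ingredient is the existence of long, thin, equivariant covers of cocompact finite-dimensional flow spaces, due to Bartels and L\"uck (generalising the construction of Bartels, L\"uck and Reich for hyperbolic groups): for every $\alpha>0$ there is an open cover $\mcW$ of $FS$ which is $G$-invariant with all isotropy groups virtually cyclic, has covering dimension bounded by a constant depending only on $\dim FS$, comes with a positive Lebesgue number $\lambda=\lambda(\alpha)$ (using that $FS$ is proper and $G$ acts cocompactly), and is \emph{$\alpha$-long in the flow direction}: for every $w\in FS$ some member of $\mcW$ contains the whole segment $\{\Phi_t(w)\mid |t|\le\alpha\}$.

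To show that $G$ is strongly transfer reducible over $\VCyc$ one fixes $S$, $k$ and the wideness parameter $n$ demanded by the definition, picks $\alpha$ of size roughly $(n+1)\beta$, forms the corresponding long thin cover $\mcW$ with Lebesgue number $\lambda$, feeds $\delta<\lambda/(n+1)$ into \autoref{def:sct} to get $T,X,\Psi,\iota$, and pulls the cover back: $\mcU\coloneqq(\Phi_T\circ\iota)^{-1}(\mcW)$. Since $\iota$ is $G$-equivariant and $\Phi_T$ commutes with the $G$-action, $\mcU$ is a $G$-invariant open cover of $G\times X$; its isotropy groups are subgroups of those of $\mcW$, hence virtually cyclic, so $\mcU$ is a $\VCyc$-cover; and $\dim\mcU\le\dim\mcW\le N$ after enlarging $N$ once and for all if necessary. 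The remaining point is the wideness of $\mcU$: for every $(g,x)\in G\times X$ the set $S^n_{\Psi,S,k}(g,x)$ should lie in a single member of $\mcU$. This is proved by induction on $n$. A single step from $(g',x')$ to some $(h,y)\in S^1_{\Psi,S,k}(g',x')$ is by definition governed by $a,b\in S$ and $f\in F_a(\Psi,S,k)$, $f'\in F_b(\Psi,S,k)$ with $f(x')=f'(y)$ and $h=g'a^{-1}b$; two applications of $(\ast)$ move $\Phi_T\iota(g',x')$ to $\Phi_T\iota(h,y)$ within an $FS$-distance $\le 2\delta$ after a flow shift of size $\le 2\beta$. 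Composing $n$ such steps keeps the accumulated $FS$-error below $\lambda$ and the accumulated flow shift below $\alpha$, so the $\alpha$-longness of $\mcW$ together with its Lebesgue number $\lambda$ put all of $\Phi_T\iota\big(S^n_{\Psi,S,k}(g,x)\big)$ into one member $W\in\mcW$, whence $S^n_{\Psi,S,k}(g,x)\subseteq(\Phi_T\circ\iota)^{-1}(W)\in\mcU$. This furnishes exactly the data in the definition of strong transfer reducibility.

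Finally, that a group which is strongly transfer reducible over $\VCyc$ satisfies $\FJCw$ is the abstract output of the transfer-reducibility machinery (Bartels and L\"uck, with the finite-wreath-product refinement of Bartels, L\"uck and Reich and of Wegner), and is part of \cite[Corollary~9.9]{covers}; we would simply invoke it. The main obstacle above is the second ingredient, the long thin covers of $FS$ together with the bound on their multiplicity. Constructing them is the technical core of this circle of ideas: one splits $FS$ into the closed $G$-invariant subset of points lying on flow-periodic orbits of bounded period — handled equivariantly by a box/slice argument, the relevant isotropy being virtually cyclic — and its complement, on which the flow itself is used to stretch a cover of a transversal into the flow direction, the finite-dimensionality of $FS$ being exactly what keeps the multiplicity bounded. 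Everything else is bookkeeping with Lebesgue numbers and the $\epsilon$--$\delta$ interplay already encapsulated in \autoref{def:sct}.
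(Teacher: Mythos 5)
This statement is not proved in the paper at all: it is quoted verbatim from \cite[Corollary~9.9]{covers} and used as a black box in the proof of \autoref{thm:main}, so there is no internal argument to compare yours with. As a reconstruction of the cited result your outline does follow the standard architecture (as in Bartels--L\"uck and Wegner): pull a long thin $\VCyc$-cover $\mcW$ of $FS$ back along $(g,x)\mapsto\Phi_T\iota(g,x)$, check that the pullback is again a $\VCyc$-cover of bounded dimension, and use property $(\ast)$ of \autoref{def:sct} together with longness in the flow direction and a Lebesgue-number argument to place each $S^n_{\Psi,S,k}(g,x)$ inside a single member.

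Two points keep this from being a proof. First, your ``second ingredient'' --- that \emph{every} cocompact, finite-dimensional flow space admits $G$-invariant $\VCyc$-covers of uniformly bounded dimension that are $\alpha$-long in the flow direction --- is precisely the technical core of the cited reference and is stronger than the published long thin cover theorem of Bartels and L\"uck in \cite{flow}, which carries additional hypotheses on the flow space (such as local connectedness and finite-dimensionality of $FS\setminus FS^{\bbR}$, a bound on orders of finite subgroups, and control of short periodic orbits); removing these hypotheses is nontrivial, and the two-sentence sketch ``periodic part by a box argument, transversal stretched by the flow elsewhere'' cannot substitute for it, whereas invoking the implication ``strongly transfer reducible over $\VCyc$ implies \FJCw{}'' as a black box is legitimate, since the quoted corollary does exactly that. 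Second, the bookkeeping is too optimistic: by \autoref{lem:1.4}~(2) a flow shift by $\tau$ distorts $d_{FS}$ by a factor $e^{|\tau|}$, and along a chain of $n$ elementary steps the shifts accumulate to size about $2n\beta$, so the errors do not simply add up to $(n+1)\delta$; choosing $\delta<\lambda/(n+1)$ is therefore not sufficient, and one needs $\delta$ of the order $e^{-cn\beta}\lambda$ (compare the factor $\delta/(e^{\beta}(k+1))$ used in the proof of \autoref{prop:transfer} in this paper). Both issues are repairable, but as written the argument delegates its hardest step to a sketch and understates the quantitative constraints.
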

\bibliographystyle{amsalpha}
\bibliography{Busemann}

\providecommand{\bysame}{\leavevmode\hbox to3em{\hrulefill}\thinspace}
\providecommand{\MR}{\relax\ifhmode\unskip\space\fi MR }
\providecommand{\MRhref}[2]{%
  \href{http://www.ams.org/mathscinet-getitem?mr=#1}{#2}
}
\providecommand{\href}[2]{#2}
\begin{thebibliography}{{Weg}15}

\bibitem[AB95]{alonso1995semihyperbolic}
Juan~M Alonso and Martin~R Bridson, \emph{Semihyperbolic groups}, Proceedings
  of the London Mathematical Society \textbf{3} (1995), no.~1, 56--114.

\bibitem[BH99]{bh}
Martin~R Bridson and Andr{\'e} Haefliger, \emph{Metric spaces of non-positive
  curvature}, vol. 319, Springer, 1999.

\bibitem[BL12a]{bartels2012borel}
Arthur Bartels and Wolfgang L{\"u}ck, \emph{The {B}orel {C}onjecture for
  hyperbolic and {CAT}(0)-groups}, Annals of Mathematics \textbf{175} (2012),
  no.~2, 631--689.

\bibitem[BL12b]{flow}
\bysame, \emph{Geodesic flow for {C}{A}{T}(0)-groups}, Geom. Topol. \textbf{16}
  (2012), no.~3, 1345--1391. \MR{2967054}

\bibitem[BLR08a]{bartels2008equivariant}
{A}rthur {B}artels, {W}olfgang {L}{\"u}ck, and {H}olger {R}eich,
  \emph{{E}quivariant covers for hyperbolic groups}, {G}eom. {T}opol
  \textbf{12} (2008), no.~3, 1799--1882.

\bibitem[BLR08b]{bartels2008k}
Arthur Bartels, Wolfgang L{\"u}ck, and Holger Reich, \emph{The {K}-theoretic
  {F}arrell--{J}ones conjecture for hyperbolic groups}, Inventiones
  mathematicae \textbf{172} (2008), no.~1, 29--70.

\bibitem[BM97]{burger1997finitely}
Marc Burger and Shahar Mozes, \emph{Finitely presented simple groups and
  products of trees}, Comptes Rendus de l'Acad{\'e}mie des Sciences-Series
  I-Mathematics \textbf{324} (1997), no.~7, 747--752.

\bibitem[{Bow}95]{bowditch}
B.H. {Bowditch}, \emph{{Minkowskian subspaces of non-positively curved metric
  spaces.}}, {Bull. Lond. Math. Soc.} \textbf{27} (1995), no.~6, 575--584
  (English).

\bibitem[BR07]{coefficients}
Arthur Bartels and Holger Reich, \emph{Coefficients for the {F}arrell--{J}ones
  conjecture}, Advances in Mathematics \textbf{209} (2007), no.~1, 337--362.

\bibitem[DL15]{lang2}
Dominic Descombes and Urs Lang, \emph{Convex geodesic bicombings and
  hyperbolicity}, Geom. Dedicata \textbf{177} (2015), 367--384. \MR{3370039}

\bibitem[Dol95]{dold1972lectures}
Albrecht Dold, \emph{Lectures on algebraic topology}, Classics in Mathematics,
  Springer-Verlag, Berlin, 1995, Reprint of the 1972 edition. \MR{1335915
  (96c:55001)}

\bibitem[Eng78]{engelking1978dimension}
Ryszard Engelking, \emph{{D}imension theory}, North-Holland Publishing Company
  Amsterdam, 1978.

\bibitem[KR]{covers}
Daniel Kasprowski and Henrik R\"uping, \emph{Long and thin covers for cocompact
  flow spaces}, arXiv:1502.05001.

\bibitem[Lan13]{lang}
Urs Lang, \emph{Injective hulls of certain discrete metric spaces and groups},
  J. Topol. Anal. \textbf{5} (2013), no.~3, 297--331. \MR{3096307}

\bibitem[LR05]{baum}
Wolfgang L{\"u}ck and Holger Reich, \emph{The {B}aum-{C}onnes and the
  {F}arrell-{J}ones conjectures in {$K$}- and {$L$}-theory}, Handbook of
  {$K$}-theory. {V}ol. 1, 2, Springer, Berlin, 2005, pp.~703--842. \MR{2181833
  (2006k:19012)}

\bibitem[Mun00]{munkres}
James~R Munkres, \emph{Topology: a first course, 2nd edition}, 2000.

\bibitem[Nag77]{nagy1977tangent}
P.~T. Nagy, \emph{On the tangent sphere bundle of a {R}iemannian 2-manifold},
  T\^ohoku Math. J. \textbf{29} (1977), no.~2, 203--208. \MR{0474090 (57
  \#13745)}

\bibitem[Sco83]{scott83}
Peter Scott, \emph{The geometries of {$3$}-manifolds}, Bull. London Math. Soc.
  \textbf{15} (1983), no.~5, 401--487. \MR{705527 (84m:57009)}

\bibitem[Weg12]{wegnercat}
Christian Wegner, \emph{The {K}-theoretic {F}arrell-{J}ones conjecture for
  {CAT}(0)-groups}, Proceedings of the American Mathematical Society
  \textbf{140} (2012), no.~3, 779--793.

\bibitem[{Weg}15]{wegner2013farrell}
Christian {Wegner}, \emph{{The Farrell-Jones conjecture for virtually solvable
  groups.}}, {J. Topol.} \textbf{8} (2015), no.~4, 975--1016 (English).

\end{thebibliography}

\end{document}